\theoremstyle{plain}
\newtheorem*{maintheorem*}{Main Theorem}
\newtheorem*{thm*}{Theorem}
\newtheorem*{thma*}{Theorem A}
\newtheorem*{thmaa*}{Theorem A'}
\newtheorem*{thmb*}{Theorem B}
\newtheorem*{thmo*}{Theorem 1.1}
\newtheorem*{thmc*}{Theorem C}
\newtheorem*{thmd*}{Theorem D}
\newtheorem*{thmmi*}{Exponential Mixing}
\newtheorem*{thmf*}{Theorem 4.1}
\newtheorem*{remark*}{Remark}
\newtheorem*{conjecture*}{Conjecture}
\newtheorem*{prop*}{Proposition}
\newtheorem*{lem*}{Basic Lemma}
\newtheorem{thm}{Theorem}[section]
\newtheorem{cor}[thm]{Corollary}
\newtheorem{lem}[thm]{Lemma}
\newtheorem{prop}[thm]{Proposition}
\theoremstyle{definition}
\newtheorem*{proofc*}{Proof of Theorem C}
\newtheorem{remark}[thm]{Remark}
\def\o{\mathcal{O}}
\def\ocal{\mathcal{O}}
\def\ccal{\mathcal{C}}
\def\fcal{\mathcal{F}}
\def\pnd{\mathcal{P}_{N,d}}
\def\bbz{\mathbb{Z}}
\def\bbq{\mathbb{Q}}
\def\bbr{\mathbb{R}}
\def\bba{\mathbb{A}}
\def\bbh{\mathbb{H}}
\def\bbg{\mathbb{G}}
\def\bbu{\mathbb{U}}
\def\bbp{\mathbb{P}}
\def\bbl{\mathbb{L}}
\def\bbv{\mathbb{V}}
\def\bbw{\mathbb{W}}
\def\bbm{\mathbb{M}}
\def\Bfrak{\mathfrak{B}}
\def\scal{\mathcal{S}}
\def\kcal{\mathcal{K}}
\def\vare{\varepsilon}
\def\h{\hspace{1mm}}
\def\hh{\hspace{.5mm}}
\def\cc{C_{00}^\infty}
\def\azs{A_{I}^0}
\def\uas{A^{(1)}}
\def\uais{A_{I}^{(1)}}
\def\supp{{\rm{supp}}}
\def\pnmd{\mathcal{P}_{N,M,d}^*}
\title[Measures invariant under horospherical subgroups]{\sc Measures Invariant Under Horospherical Subgroups in Positive characteristic}
\author{A.~Mohammadi}
\date{}
\address{Mathematics Dept., University of Chicago, Chicago, IL}
\email{amirmo@math.uchicago.edu}
\begin{document}
\maketitle

\begin{abstract}
We prove measure rigidity for the action of (maximal) horospherical subgroups on homogeneous spaces obtained by quotient by a uniform (nonuniform) arithmetic lattices over a field of positive characteristic.
\end{abstract}

\section{Introduction}\label{sec;intro}

Let $K$ be a global function field and let $S$ be a finite set of places in $K.$ Let $\o_S$ be the ring of $S$-integers. For each $\nu\in S$ let $K_{\nu}$ be the completion of $K$ with respect to $\nu$ and let $K_S=\prod_{\nu\in S}K_\nu.$ Let $\bbg$ be a connected semisimple group defined over $K$. Let $G=\prod_{\nu\in S}\bbg(K_{\nu}).$ The locally compact topology of $K_S$ induces a locally compact topology on $G$ which we will refer to as the Hausdorff topology. Let $\Gamma$ be a congruence lattice in $\bbg(\o_S).$ We let $X=G/\Gamma.$ By a horospherical subgroup we mean the unipotent radical of a $K_S$-parabolic subgroup of $G.$ If $\bbp$ is a $K$-parabolic of $G,$ let $\bbp=\bbl\bbw$ be the Levi decomposition of $\bbp$ and let $\bbm=[\bbl,\bbl].$ Let ${}^\circ P=MW$ where $M=\bbm(K_S)$ and $W=\bbw(K_S).$ We let $P^+$ denote the group generated by all $K_S$-split unipotent subgroups of $P=\bbp(K_S),$ see section~\ref{sec;notation} for more details. In this paper we prove

\begin{thm}\label{horospherical}
Let notation be as above. Assume $U$ is a horospherical subgroup of $G.$ Let $\mu$ be a $U$-invariant ergodic probability measure on $X=G/\Gamma.$ 
\begin{itemize}
\item[(i)] If $\Gamma$ is a uniform lattice, then the action of $U$ is uniquely ergodic.
Furthermore $\mu$ is the probability Haar measure on the closed orbit $\overline{G^+\Gamma}/\Gamma$ where the closure is with respect to the Hausdorff topology.  
\item[(ii)] If $\Gamma$ is non-uniform, then assume that there exists some $\nu\in S$ such that $U$ contains a maximal horospherical subgroup of $\bbg(K_{\nu}).$ There exists some
$K$-parabolic subgroup $\bbp$ of $\bbg$ and some $g\in G$ such that $g^{-1}Ug\subset P=\mathbb{P}(K_S)$ and $\mu$ is the $\Sigma$-invariant measure on the closed orbit $\Sigma\h g\Gamma$ where $\Sigma=g\overline{P^+({}^\circ P\cap\Gamma)}g^{-1}$ and the closure is with respect to the Hausdorff topology.
\end{itemize}
\end{thm}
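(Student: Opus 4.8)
The plan is to run the standard strategy for rigidity of horospherical actions --- use the diagonalizable element that normalizes $U$ to ``thicken'' $\mu$ and manufacture extra invariance --- but to carry it out over a positive-characteristic function field, where the two new difficulties are that there is no exponential map (so every divergence estimate must be done in explicit coordinates on the root groups over the various $K_\nu$, and this is what forces $G^+,P^+$ rather than $G,P$ into the conclusion) and that non-divergence of unipotent flows is delicate. Fix a $K$-parabolic $\bbp=\bbl\bbw$ with $U=\bbw(K_S)$, and choose $a$ in the split center of $\bbl(K_S)$ so that $a^{n}ua^{-n}\to e$ for all $u\in U$; then $U$ is the stable horospherical subgroup of $a$, the opposite horospherical $U^-$ is the unstable one, $Z:=Z_G(a)=\bbl(K_S)$, and $U^-\cdot Z\cdot U$ is Zariski-open around $e$. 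Note $U\subset G^+$ and that the subgroup generated by $U$ and $U^-$ is $G^+$. Since $U$ is unipotent --- hence amenable, with a F\o lner sequence of ``boxes'' $B_T$ (products over $\nu\in S$ of balls in the root groups) --- the pointwise ergodic theorem supplies a $\mu$-conull set of points generic for the averages $A_Tf(x)=|B_T|^{-1}\int_{B_T}f(ux)\,du$. In the non-uniform case one must also invoke the positive-characteristic analogue of the Dani--Margulis non-divergence estimate to prevent these averages from leaking into the cusps, and it is precisely here that the hypothesis that $U$ contain a maximal horospherical subgroup of $\bbg(K_\nu)$ for some $\nu$ is needed, the usual $(C,\alpha)$-good polynomial bounds being too weak over function fields without a full maximal horospherical at one place.

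\textbf{The uniform case.} Here $X$ is compact and the goal is that the only $U$-invariant ergodic probability measure is the Haar measure $m$ on $Y:=\overline{G^+\Gamma}/\Gamma$ (which is $U$-invariant since $U\subset G^+$), which is unique ergodicity. Take $\mu\neq m$ as above and compare two nearby $\mu$-generic points $x$ and $y=gx$ with $g$ small and lying in the transverse directions $U^-Z$. Since $a$ contracts $U$ while expanding the $U^-$-component of $g$, applying $a^{n}$ makes the long $U$-orbit pieces through $x$ and through $y$ diverge at a controlled rate --- polynomial in the transverse displacement --- while both pieces are contracted inside $U$-cosets; the fastest transverse direction of this divergence produces a $K_S$-split unipotent one-parameter subgroup of $G^+$, not contained in $U$, under which $\mu$ is invariant. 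This is Ratner's shearing mechanism, with the (exponential) mixing of the $a$-action on $(Y,m)$ and the F\o lner property of $U$ providing the quantitative control that enough such generic pairs occur and that the boxes $B_T$ distribute correctly in the transverse directions. One then bootstraps: the smallest closed subgroup $H$ with $U\subseteq H$ and $\mu$ being $H$-invariant is generated by $K_S$-split unipotent subgroups, is normalized by $a$, and is such that $\mu$ is $H$-ergodic, which forces $H=G^+$; hence $\mu$ is $G^+$-invariant, so $\mu=m$, a contradiction.

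\textbf{The non-uniform case.} Now $\Gamma$ contains unipotent elements, so closed $U$-orbits --- and closed orbits of larger intermediate subgroups --- exist, and these are organized by the cusps of $X$, that is, by the $K$-parabolic subgroups of $\bbg$. Running the same divergence analysis, either the bootstrapping again reaches all of $G^+$, in which case $\bbp=\bbg$, $\Sigma=g\overline{G^+\Gamma}g^{-1}$ and $\mu$ is Haar on $Y$; or else, by the non-divergence estimate, $\mu$-almost every $U$-orbit spends a definite proportion of time deep inside one fixed cusp. In the latter case one conjugates by $g\in G$ so that $g^{-1}Ug\subset P$ for the $K$-parabolic $\bbp$ attached to that cusp, uses the structure of cusps (together with the fact that $\mu$-a.e.\ point then lies on a closed orbit of $\Sigma=g\overline{P^+({}^\circ P\cap\Gamma)}g^{-1}$) to see that $\mu$ is supported on such an orbit, and re-runs the shearing and bootstrapping argument inside the smaller group $\Sigma$ to identify $\mu$ with the $\Sigma$-invariant probability measure on $\Sigma\,g\Gamma$. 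The hypothesis on $U$ is what makes ``escape into a cusp'' coherent --- a maximal horospherical subgroup at one place already sees the entire unipotent radical of the relevant parabolic --- and what keeps the non-divergence dichotomy clean.

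\textbf{Main obstacle.} I expect the hard part to be twofold. First, rebuilding the analytic inputs over a positive-characteristic function field: non-divergence of unipotent flows is genuinely subtle (polynomials no longer separate points well and $(C,\alpha)$-goodness degrades), there is no exponential map so all divergence and shearing estimates must be done in explicit root-group coordinates over each $K_\nu$, and one must everywhere distinguish $K_S$-split from possibly non-split unipotent subgroups --- which is exactly why $G^+$ and $P^+$, not $G$ and $P$, appear in the conclusion and why $\overline{P^+({}^\circ P\cap\Gamma)}$ is only the Hausdorff closure. Second, in the non-uniform case one must glue the local cusp picture to the global measure: showing that a $\mu$-generic orbit which repeatedly visits a cusp is in fact confined to the closed orbit of the corresponding $\Sigma$, and that the bootstrapping terminates exactly at $\Sigma$ and not at some intermediate subgroup, is where the real work lies.
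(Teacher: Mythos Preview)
Your proposal follows Ratner's shearing mechanism --- compare two nearby generic points, track polynomial divergence in the transverse directions, extract a new one-parameter unipotent invariance, and bootstrap up to $G^+$. The paper does \emph{not} do this. Its engine is the Margulis mixing argument: write $U=W_G^+(s)$ for an element $s$ of class $\mathcal{A}$, and use decay of matrix coefficients (Theorem~\ref{thm;exp-mixing}, packaged as Proposition~\ref{m;equidistribution}) to show directly that for every $\phi\in C_{00}^\infty(X)$ and every $x$,
\[
\Bigl|\tfrac{1}{\theta(U_n)}\int_{U_n}\phi(ux)\,d\theta(u)\Bigr|=\Bigl|\int_{U_0}\phi(s^nus^{-n}x)\,d\theta(u)\Bigr|\longrightarrow 0.
\]
By the pointwise ergodic theorem along the F\o lner boxes this forces $\int_X\phi\,d\mu=0$ for all $\phi\in C_{00}^\infty(X)$, and a one-line lemma (Lemma~\ref{G+inv}) then says $\mu$ is $G^+$-invariant. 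That is the whole proof of (i); there is no comparison of nearby orbits, no divergence estimate, no bootstrapping. In (ii) the paper runs the non-divergence dichotomy (Theorem~\ref{non-divergence}) at a single generic point: in alternative (1) the same mixing argument works after inserting an auxiliary average so that the base point $s^mus^{-m}s^{-n}x$ lies in a fixed compact set; in alternative (2) one lands inside a proper $K$-parabolic ${}^{\circ}P_i=MW$, disintegrates $\mu$ over the $W$-orbits, and applies the induction hypothesis to the $V_\nu$-ergodic quotient measure on $M/(M\cap\Delta)$.

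This difference matters. The shearing route you outline is exactly the part of Ratner's program that is \emph{open} in positive characteristic: extracting a well-defined ``fastest direction'' one-parameter $K_S$-split unipotent subgroup from transverse divergence, and then iterating to reach $G^+$, has no published proof over function fields, and the paper explicitly positions itself as bypassing this. So your plan is not so much an alternative proof as a sketch of the unknown general theorem; the steps ``the fastest transverse direction of this divergence produces a $K_S$-split unipotent one-parameter subgroup'' and ``one then bootstraps \ldots\ which forces $H=G^+$'' are precisely where positive characteristic bites and where you have no argument. By contrast, mixing gives $G^+$-invariance in one stroke, with the only analytic input being Howe--Moore (or its effective form), and the induction in (ii) replaces your ``re-run shearing inside $\Sigma$'' by a clean reduction to a semisimple Levi of smaller dimension.
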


Using this theorem and the Linearization techniques, see~\cite{DM2}, we conclude, as a corollary, equidistribution of orbits of subgroups satisfying the assumptions of Theorem~\ref{horospherical}. The statement of this corollary needs some notation which will be fixed in section~\ref{sec;notation} hence we postpone the discussion to the end of the paper, see Corollary~\ref{equi-horo}. It is worth mentioning that the method which is used here in order to prove Theorem~\ref{horospherical} seems to actually be enough to get the equidistribution result without appealing to linearization techniques, indeed in the course of the proof of Theorem~\ref{horospherical} we prove equidistribution in case (i) and ``generic" case of (ii), see the proof in section~\ref{sec;horo} for more details.  

These are indeed very special cases of Ratner's measure classification and equidistribution theorems in positive characteristic, see~\cite{Rat3, Rat4, Rat5, Rat6} and also~\cite{MT}. The full measure classification theorem in positive characteristic in not known, there have been some partial results and developments over the past few years. The case of horospherical subgroups is probably ``the easiest" case to consider. This problem in the real case was considered by S.~G.~Dani~\cite{D1, D2} some ten years before Ratner's proof. Our approach is however different from Dani's approach. We use mixing properties of semisimple elements. These lines of ideas go back to G. A. Margulis' PhD thesis and have been extensively utilized by many others ever since. The idea of using mixing property to prove measure classification for horospherical flows, used in this paper, are by no means new however the results in positive characteristic are new, see also~\cite{Rat1}. Another possible approach to this problem would be to use representation theoretic approach of M.~Burger in~\cite{Bu}. It is worth mentioning that representaion theoretical  proofs work very well when the horospherical subgroup is maximal. This assumtion seems inavitable in both Dani's and Burger's proof. Our approach treats the case: $\Gamma$ is a uniform lattice, without this assumption and actually the proof in this case is very simple. In the non-uniform case we too also need assume that the horspherical subgroup contains the maximal horospherical subgroup in one place. Indeed over local fields with characteristic zero the case: $\Gamma$ is non-uniform and the horospherical is not maximal follows from measure rigidity theorem. What is surprising is that, to the best of our knowledge, no simpler proof is known even for this very special case.    

\textbf{Acknowledgements.} We would like to thank Professor G.~A.~Margulis for leading us into this direction of research and for many enlightening conversations in the course of our graduate studies and also after graduation to date. We would also like to thank M. Einsiedler for his interest in this project and his encouragement.       

\section{Notation and Preliminary}\label{sec;notation}

Let $G$ be as above. There are two different topologies on $G,$ the Zariski topology and the locally compact topology which is induced from the topology of $K_S.$ We will refer to the locally compact topology as the Hausdorff topology. Let $\kcal$ be a good maximal compact subgroup of $G$ in the sense of~\cite[section 3]{Ti}, see also~\cite[proposition 2.1]{Oh}. In particular the Iwasawa decomposition and the Cartan decomposition for $G$ holds with $\kcal,$ as the maximal compact subgroup. We fix a left $G$-invariant bi-$\kcal$-invariant metric $d$ on $G$ and let $\|g\|=d(1, g).$ We let 
$C^{\infty}(X)$ denote the space of functions $\phi:X\rightarrow\bbr$ such that there exists a compact open subgroup $\kcal_\phi$ of $G$ which leaves $\phi$ invariant. 
Note that these functions are indeed $\kcal$-finite.   

Let $s\in G$ be a diagonalizable element over $K_{S}$ such that the $\nu$-component $s_{\nu}$ of $s$ has eigenvalues which are integer powers of the uniformizer $\varpi_{\nu},$ of $K_\nu$ for all $\nu\in S.$ We call such $s$ ``an element from class $\mathcal{A}.$". Define

$$W_G^+(s)=\{x\in G\hh|\h s^{n}xs^{-n}\rightarrow e\h\h\mbox{as}\h\h n\rightarrow-\infty\}$$
$$\begin{array}{l}W_G^-(s)=\{x\in G\hh|\h s^{n}xs^{-n}\rightarrow e\h\h\mbox{as}\h\h n\rightarrow+\infty\} \vspace{.75mm}\\ Z_G(s)=\{x\in G\hh|\h sxs^{-1}=x\}\end{array}$$

If it is clear from the context we sometimes omit the subscript $G$ above. Note that $W_G^{\pm}(s)$ and $Z_G(s)$ are the groups of $K_S$-points of $K_S$-algebraic subgroups $\mathbb{W}_G^{\pm}(s)$ and $\mathbb{Z}_G(s)$ respectively. The product map is a $K_S$-isomorphism of $K_S$-varieties between  $\mathbb{W}^-(s)\times\mathbb{Z}_G(s)\times\mathbb{W}^+(s)$ and a Zariski open dense subset of $\bbg$ containing the identity, see for example~\cite[chapter V]{B1} for these statements. Hence if $\kcal^{\ell}$ is a deep enough congruence subgroup then 
\begin{equation}\label{e;iwasawa-comp}\kcal^\ell=(\kcal^\ell\cap W^-(s))((\kcal^\ell\cap Z_G(s)))((\kcal^\ell\cap W^+(s)))=(\kcal^\ell)^-(\kcal^\ell)^0(\kcal^\ell)^+\end{equation}
In the sequel if $\bullet_\nu$ denotes the $\nu$-component of $\bullet.$ Let $s$ be an element from class $\mathcal{A}$ and let $U\subset W^+(s)$ be a unipotent subgroup which is normalized by $s$. Let $U_0=U\cap\mathcal{K}.$ For any $m>0$ define $U_m=s^mU_0s^{-m}$ and $(U_\nu)_m=s^m(U_\nu)_0s^{-m}.$ These form a filtration of $U.$ We let $\theta_\nu$ be a Haar measure on $(U_\nu)_0$ normalized such that $\theta_\nu((U_\nu)_0)=1$ and let $\theta=\prod_\nu\theta_\nu.$ Note that $\{U_m\}$ and $\{(U_\nu)_m\}$ form an averaging sequence for $U$ and $U_\nu$ respectively.

Let $\bbh$ be a group defined over $K_S$ and let $H=\bbh(K_S).$ We will let $H^+$ denote the group generated by all $K_S$-points of all the $K_S$-split unipotent subgroups of $H.$ Thanks to~\cite[theorem, C.3.8]{CGP} we have $H^+$ is the group generated by the $K_S$-points of all the $K_S$-split unipotent radicals of minimal $K_S$-pseudo parabolic subgroups of $\bbh.$ Note that $H^+$ is a normal unimodular subgroup of $H.$ 

Let $\mathbb{H}$ be a $K_S$-algebraic group acting $K_S$-rationally on a $K_S$-algebraic variety $\mathbb{M}$. Assume that $H=\bbh(K_S)$ is generated by one parameter $K_S$-split unipotent algebraic subgroups and elements from class $\mathcal{A}.$ The following is proved in~\cite{MT}. 

\begin{lem}\label{h;bz-measure}(cf.~\cite[Lemma 3.1]{MT})
Let $\mu$ be an $H$-invariant Borel probability measure on $M=\mathbb{M}(K_S)$. Then $\mu$ is concentrated on the set of $H$-fixed points in $M.$ In particular if $\mu$ is $H$-ergodic then $\mu$ is concentrated at one point.   
\end{lem}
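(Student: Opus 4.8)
The plan is to reduce the statement to the case of a single one-parameter $K_S$-split unipotent subgroup and to a single element from class $\mathcal{A}$, handle each of these two cases separately, and then combine them using the hypothesis that $H$ is generated by such subgroups together with the ergodic decomposition. For the unipotent case, let $\mathbb{U}\cong\mathbb{G}_a$ be a one-parameter $K_S$-split unipotent subgroup acting $K_S$-rationally on $\mathbb{M}$. Fix a point $x\in M$ and consider the orbit map $\mathbb{G}_a\to\mathbb{M}$, $t\mapsto u(t)x$; since the action is $K_S$-rational, for any regular function $f$ the map $t\mapsto f(u(t)x)$ is a polynomial (a $K_S$-point of $K_S[\mathbb{G}_a]$) of bounded degree. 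Using a local embedding of $\mathbb{M}$ into affine space, the orbit is either a single point or a nonconstant polynomial curve. The key analytic input in positive characteristic is that a $U$-invariant probability measure cannot give positive mass to any such nonconstant bounded-degree curve unless the curve is constant: averaging $f$ along the filtration $\{U_m\}$ and using invariance forces $f$ to be almost everywhere constant along $U$-orbits, hence $f(u(t)x)$ is constant in $t$ for $\mu$-a.e.\ $x$. Running this over a countable separating family of regular functions $f$ shows $\mu$ is supported on $\mathrm{Fix}(U)$.

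For an element $s$ from class $\mathcal{A}$, the action on $\mathbb{M}$ decomposes $M$ (locally, via weight spaces) into expanding, neutral, and contracting parts for $s$. A probability measure invariant under the $\mathbb{Z}$-action of $s$ must be supported, by a Poincar\'e recurrence argument applied along $s^n$ and $s^{-n}$, on the set where no coordinate escapes to infinity under either forward or backward iteration; since the eigenvalues are integer powers of the uniformizer $\varpi_\nu$, this forces the measure to concentrate on the fixed-point set $Z_{\mathbb{M}}(s) = \mathrm{Fix}(s)$. Concretely: if a regular function $f$ has an expanding weight at $x$, then $|f(s^n x)|_\nu\to\infty$, contradicting that pushforward of a probability measure is a probability measure; so every weight is zero $\mu$-a.e., i.e.\ $f(sx)=f(x)$ for all $f$ in a separating family, $\mu$-a.e.

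Finally I would assemble the two cases. Since $H$ is generated by one-parameter $K_S$-split unipotent subgroups $\{\mathbb{U}_i\}$ and elements $\{s_j\}$ from class $\mathcal{A}$, and $\mu$ is $H$-invariant hence invariant under each generator, the above shows $\mu$ is concentrated on $\bigcap_i \mathrm{Fix}(U_i)\cap\bigcap_j\mathrm{Fix}(s_j) = \mathrm{Fix}(H)$, giving the first assertion. For the "in particular": $\mathrm{Fix}(H)$ is a $K_S$-subvariety on which $H$ acts trivially, so if $\mu$ is additionally $H$-ergodic then — since the $H$-action on $\mathrm{Fix}(H)$ is trivial, every Borel subset is $H$-invariant — ergodicity forces $\mu$ to be a point mass. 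The main obstacle I anticipate is the unipotent step: in positive characteristic the additive group $\mathbb{G}_a$ is not generated by a single one-parameter subgroup in the topological/analytic sense one is used to over $\mathbb{R}$, and the polynomial $t\mapsto f(u(t)x)$ can have degree divisible by the characteristic (so "differentiation" arguments fail); one must instead argue purely via the averaging sequence $\{U_m\}$ and the boundedness forced by $\mu$ being a probability measure, which is exactly why the filtration $\{U_m\}$ and normalized Haar measures $\theta_\nu$ were set up in the preceding paragraph. Everything else is a routine separating-family-of-regular-functions plus recurrence argument, and indeed this is why the lemma is quoted from [MT].
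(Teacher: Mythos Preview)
Your approach is essentially correct and self-contained, but it differs from what the paper does. The paper does not prove this lemma; it cites \cite[Lemma~3.1]{MT} and remarks that the proof there rests on the orbit space $M/H$ being Hausdorff, which comes from \cite[Appendix~A]{BZ}. That route runs as follows: once $H$-orbits are locally closed, every $H$-ergodic invariant probability measure is supported on a single $H$-orbit; one then observes that a nontrivial orbit of $\mathbb{G}_a(K_S)$ or of $\langle s\rangle$ for $s$ in class~$\mathcal{A}$ carries no finite invariant measure (such an orbit is a copy of $K_\nu$ or of $\mathbb{Z}$), so each ergodic component is a point mass at a fixed point. You instead bypass the orbit-space topology entirely and argue pointwise via polynomial growth plus recurrence for each type of generator. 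Your route is more elementary and avoids invoking the Bernstein--Zelevinski input; the cited route is shorter once that input is granted and is the standard reference argument.

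One step to tighten: the phrase ``averaging $f$ along the filtration $\{U_m\}$'' is not quite the right mechanism, because $f$ takes values in $K_S$ rather than in $\mathbb{R}$, and over a non-archimedean field the normalized integrals $\frac{1}{\theta(U_m)}\int_{U_m} f(u\cdot x)\,d\theta(u)$ of a nonconstant polynomial need not diverge (for instance the average of $t$ over a ball centred at $0$ vanishes). The clean argument is Poincar\'e recurrence applied to the real-valued function $x\mapsto|f(x)|_\nu$: for $\mu$-a.e.\ $x$ the orbit must return to $\{|f|_\nu\le C\}$ whenever that set has positive measure, so $t\mapsto|f(u(t)x)|_\nu$ is bounded, and a polynomial over $K_\nu$ with bounded modulus is constant. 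You already allude to exactly this at the end (``boundedness forced by $\mu$ being a probability measure''), so this is a matter of phrasing rather than a genuine gap.
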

It is worth mentioning that the proof in~\cite{MT} follows from the fact that the orbit space in this situation is Hausdorff which is a consequence of~\cite[appendix A]{BZ}).

We also need the following general and simple statement.

\begin{lem}\label{h;normal-unimodular}(cf.~\cite[Lemma 10.1]{MT})
Let $A$ be a locally compact second countable group and $\Lambda$ a discrete subgroup of $A.$ If $B$ is a normal unimodular subgroup of $A$ and $\mu$ is a $B$-invariant ergodic measure on $A/\Lambda.$ Let $\Sigma=\overline{B\Lambda},$ where the closure is with respect to the Hausdorff topology. There exists $x\in A/\Lambda$ such that $\Sigma\hh x$ is closed and $\mu$ is the $\Sigma$-invariant Haar measure on $\Sigma\hh x.$ 
\end{lem}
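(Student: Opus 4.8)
The plan is to use the normality of $B$ to push the measure down to a quotient, apply ergodicity there, and then lift back. First I would consider the map $\pi: A/\Lambda \to B\backslash A / \Lambda$ (equivalently, look at the $A$-action on the space of $B$-orbit closures). Since $B$ is normal in $A$, left translation by $A$ permutes the $B$-orbits on $A/\Lambda$, so $\Sigma = \overline{B\Lambda}$ is a closed subgroup and the collection $\{\Sigma a \Lambda : a \in A\}$ partitions $A/\Lambda$ into closed $\Sigma$-orbits; the quotient $\Sigma\backslash A/\Lambda$ carries the induced $A$-action (which factors through $A/\Sigma$). The key point is that a $B$-invariant ergodic measure $\mu$, being supported on $B$-orbit closures and invariant, must actually be supported on a single such closed orbit $\Sigma x$: the function $x \mapsto \overline{Bx}$ is $B$-invariant and measurable, hence essentially constant by ergodicity, so $\mu$ is concentrated on one closed set of the form $\Sigma x$.

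Next I would show $\mu$ is the $\Sigma$-invariant Haar measure on $\Sigma x$. Write $x = a\Lambda$ and set $\Lambda' = \Sigma \cap a\Lambda a^{-1}$, so that $\Sigma x \cong \Sigma/\Lambda'$ as a $\Sigma$-space (the orbit is closed, so this is a homeomorphism). Transport $\mu$ to a $B$-invariant probability measure $\bar\mu$ on $\Sigma/\Lambda'$. Now $B$ is normal in $\Sigma$ (it is normal in $A$) and $\Sigma/B$ is abelian-by-compact — more precisely, $\Sigma/B = \overline{B\Lambda}/B$ is the closure of the image of $\Lambda$, which is a compactly generated abelian group since it is topologically generated by... actually the cleaner route: $\Sigma/B$ is generated topologically by the image of $\Lambda'$ hence is a quotient of a finitely generated group by its closure relations, but in general I only need that $\Sigma/B$ acts on the space of $B$-invariant measures on $\Sigma/\Lambda'$ and $\bar\mu$ is a $B$-invariant (hence, descending) measure whose pushforward to $B\backslash\Sigma/\Lambda'$ is a single atom (again by the orbit-closure argument applied inside $\Sigma$, since $\overline{B\Lambda'} = \Sigma$ by construction — wait, this needs care). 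The correct statement is: by construction of $\Sigma$ as $\overline{B\Lambda}$, the $B$-orbit of the point $x$ is dense in $\Sigma x$, so $\bar\mu$ is a $B$-invariant measure on $\Sigma/\Lambda'$ whose support is all of $\Sigma/\Lambda'$ and whose pushforward to $B\backslash\Sigma/\Lambda'$ is concentrated at the single point which is the image of $x$ — but that pushforward space is a single point only after we know the $B$-orbit closure is everything, which is exactly what density gives. Then any two $B$-invariant ergodic probability measures with full support on $\Sigma/\Lambda'$ that project to the same point downstairs must agree: this is a standard uniqueness statement for invariant measures under a unimodular subgroup whose orbit closure is the whole space, which in turn follows because such a measure must be the Haar measure on $\Sigma/\Lambda'$ — one verifies $\Sigma$-invariance of $\bar\mu$ by showing that for each $\sigma \in \Sigma$, the measure $\sigma_*\bar\mu$ is again $B$-invariant (as $B \trianglelefteq \Sigma$), has the same projection downstairs (as $\Sigma/B$ acts trivially on a point), and by ergodicity of $B$ equals $\bar\mu$.

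To fill the gap in the previous step, I would invoke unimodularity: $B$ is unimodular, so a $B$-invariant measure on $\Sigma/\Lambda'$ with dense $B$-orbit is unique up to scaling (the space of such is one-dimensional — this is where unimodularity of $B$ is used, guaranteeing the modular function does not obstruct the existence of a finite invariant measure and its uniqueness), hence $\sigma_*\bar\mu = \bar\mu$ for all $\sigma$, giving $\Sigma$-invariance, and then $\bar\mu$ must be the normalized Haar measure on $\Sigma/\Lambda'$ (which exists precisely because the orbit $\Sigma x$ carries a finite $\Sigma$-invariant measure, namely $\bar\mu$ itself, so $\Lambda'$ is a lattice in $\Sigma$ and Haar-on-$\Sigma/\Lambda'$ is the unique invariant probability measure). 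Pulling back to $A/\Lambda$ identifies $\mu$ with the $\Sigma$-invariant Haar measure on the closed orbit $\Sigma x$, as claimed.

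The main obstacle I anticipate is making the descent-and-lift argument rigorous when $\Sigma/B$ is not discrete: the quotient $B\backslash \Sigma/\Lambda'$ need not be nice, so rather than literally pushing forward to a point I would argue directly that for $\sigma \in \Sigma$ the translate $\sigma_* \bar\mu$ is $B$-invariant and ergodic (conjugation by $\sigma$ preserves $B$), and then show $\sigma_*\bar\mu$ and $\bar\mu$ are not mutually singular — because both are supported on all of $\Sigma/\Lambda'$ and the ergodic $B$-invariant measures supported on a fixed $B$-orbit closure form, by unimodularity, essentially a single equivalence class — forcing $\sigma_*\bar\mu = \bar\mu$ by ergodicity. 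This is exactly the kind of soft argument that Lemma \ref{h;bz-measure} and standard ergodic-theoretic facts about unimodular subgroups (e.g.\ as in~\cite{MT}) are designed to supply, so I expect the proof to be short once this point is organized correctly.
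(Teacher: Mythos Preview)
The paper gives no proof of this lemma; it is stated with a citation to \cite[Lemma~10.1]{MT} and used as a black box. So there is nothing in the paper to compare your argument against beyond noting that your overall outline --- use ergodicity to land on a single $B$-orbit closure, identify that closure with $\Sigma/\Lambda$, then upgrade $B$-invariance to $\Sigma$-invariance via the density of $B\Lambda$ and unimodularity of $B$ --- is exactly the standard route taken in \cite{MT}.

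That said, the step where you pass from $B$-invariance to $\Sigma$-invariance is not actually carried out. You assert that ``a $B$-invariant measure on $\Sigma/\Lambda'$ with dense $B$-orbit is unique up to scaling,'' and separately you propose to show $\sigma_*\bar\mu=\bar\mu$ by arguing that the two measures ``are not mutually singular because both are supported on all of $\Sigma/\Lambda'$''. The second argument is simply wrong: two ergodic probability measures can share full support and still be mutually singular (minimality does not imply unique ergodicity). The first assertion is the heart of the matter and needs a proof, not an invocation.

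The clean way to supply it is to lift $\bar\mu$ to a $\sigma$-finite measure $\tilde\mu$ on $\Sigma$ that is left-$B$-invariant and right-$\Lambda$-invariant. Since $B$ is \emph{normal}, each coset $B\sigma=\sigma B$ is a single fiber of $\Sigma\to\Sigma/B$, and the conditional measures of $\tilde\mu$ on these fibers are translates of left Haar on $B$; unimodularity of $B$ makes these coincide with right Haar, so $\tilde\mu$ is also right-$B$-invariant. Hence $\tilde\mu$ is right-$(B\Lambda)$-invariant, and by density right-$\Sigma$-invariant, i.e.\ a scalar multiple of right Haar on $\Sigma$. Since $\bar\mu$ and every translate $\sigma_*\bar\mu$ are probability measures, the modular function of $\Sigma$ is forced to be trivial, and $\bar\mu$ is the (bi-invariant) Haar probability on $\Sigma/\Lambda$. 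Once you insert this paragraph, your proof is complete. As a minor bookkeeping point, the $B$-orbit closure of $a\Lambda$ is $a\Sigma/\Lambda$, not $\Sigma a\Lambda/\Lambda$; these differ unless $a$ normalizes $\Sigma$, and the measure you obtain is naturally $a\Sigma a^{-1}$-invariant, which is consistent with how the lemma is applied in Section~\ref{sec;horo}.
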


Let $L^2_{00}(X)$ denote the orthogonal complement of $G$-characters appearing in the regular representation of $G$ on $L^2(X).$ Hence we have $L^2(X)=L^2_{00}(X)\oplus(L^2_{00}(X))^{\perp}.$ Indeed $L^2_{00}(X)$ can also be characterized as the orthogonal complement of $G^+$-fixed vectors, we will actually use this characterization in this paper. We let $\cc(X)=C^{\infty}(X)\cap L^2_{00}(X).$ We recall the following important and by now folk statement. 

\begin{thm}\label{thm;exp-mixing}(Exponential Decay of Matrix Coefficients)

Let $G$ and $\Gamma$ be as above. Then there exist positive constants $\kappa$ and $E$ depending on $G$ and $\Gamma$ such that for all $\phi,\psi\in\cc(X)$
\begin{equation}\label{e;exp-mixing}\left|\langle g\phi,\psi\rangle\right|\leq Ee^{-\kappa\|g\|}(\dim\langle\mathcal{K}\phi\rangle\dim\langle\mathcal{K}\psi\rangle)^{\frac{1}{2}}\end{equation}
where $\langle\mathcal{K}\bullet\rangle$ denotes the $\mathcal{K}$-span of $\bullet.$
\end{thm}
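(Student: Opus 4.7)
The plan is to follow the classical route: reduce via the Cartan decomposition $G=\kcal A^+\kcal$ to an estimate for $a\in A^+$, exploit a uniform spectral gap for the restriction of the regular representation to $L^2_{00}(X)$, and then feed this into the Cowling--Haagerup--Howe pointwise bound for matrix coefficients of $\kcal$-finite vectors.

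First I would invoke the Cartan decomposition to write $g=k_1 a k_2$ with $a\in A^+$, and observe $\|g\|=\|a\|$ by bi-$\kcal$-invariance of the metric. Because $\kcal$ acts unitarily and the $\kcal$-span is preserved under left multiplication by elements of $\kcal$, replacing $\phi$ by $k_2\phi$ and $\psi$ by $k_1^{-1}\psi$ leaves both $|\langle g\phi,\psi\rangle|$ and the two dimensions $\dim\langle\kcal\phi\rangle,\dim\langle\kcal\psi\rangle$ unchanged. Hence it suffices to prove the bound for $|\langle a\phi,\psi\rangle|$ with $a\in A^+$.

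Since $L^2_{00}(X)$ is by definition the orthogonal complement of the $G^+$-invariants, the key representation-theoretic input I would use is a uniform spectral gap: the restriction of the regular representation to $L^2_{00}(X)$ is strongly $L^p$ for some $p=p(G,\Gamma)<\infty$. In positive characteristic this is property $(\tau)$ for the congruence lattice $\Gamma$, combining Kazhdan's property $(T)$ for higher-rank factors with the Drinfeld--Lafforgue Ramanujan-type bounds (and Jacquet--Langlands) for the rank-one factors. Granting this, the standard tensor-product argument of Cowling--Haagerup--Howe, which is purely representation-theoretic and insensitive to the characteristic of the local field, yields, for $\kcal$-finite $\phi,\psi$ and $a\in A^+$,
\[
|\langle a\phi,\psi\rangle|\;\leq\; C\,\Xi_G(a)^{2/p}\,(\dim\langle\kcal\phi\rangle\dim\langle\kcal\psi\rangle)^{1/2}\|\phi\|_2\|\psi\|_2,
\]
where $\Xi_G$ is the Harish-Chandra bi-$\kcal$-invariant spherical function on $G$. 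The classical asymptotics $\Xi_G(a)\leq C'e^{-\eta\|a\|}$ on $A^+$ (with $\eta>0$ depending only on the root datum of $G$, obtained by dominating $\Xi_G$ by the modular character of a minimal parabolic raised to $-1/2$ times a polynomial in $\|a\|$) then gives the claimed bound with $\kappa=2\eta/p$, after the $L^2$ norms are absorbed into $E$.

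The main obstacle is the uniform spectral gap itself: obtaining the strong $L^p$ property for $L^2_{00}(X)$ in the function-field setting depends on the deep work on Ramanujan bounds over function fields and on the congruence structure of $\Gamma$, and is essentially what makes the statement nontrivial. The remaining steps -- the Cartan reduction, the tensor-product trick, and the decay of $\Xi_G$ on $A^+$ -- are formal and transfer verbatim from the archimedean setting, which is why the authors (justly) refer to this result as folklore.
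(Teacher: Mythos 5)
The paper gives no proof of this theorem: it records it as a folklore statement, deferring to property (T) bounds (Oh) in higher rank and to Clozel and Burger--Sarnak for the spectral gap in general, which are exactly the inputs you invoke. Your sketch --- Cartan reduction, a uniform spectral gap (property $(\tau)$, i.e.\ strong $L^p$ for $L^2_{00}(X)$), the Cowling--Haagerup--Howe bound for $\kcal$-finite vectors, and exponential decay of $\Xi_G$ on $A^+$ --- is precisely the standard route those references implement, so it is correct in outline and matches the paper's (implicit) argument, leaving, just as the paper does, the function-field spectral gap as the one genuinely deep and not fully documented input.
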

If for all $\nu\in S$ the $K_\nu$-rank of all of the $K_\nu$-simple factors of $\bbg$ is at least 2, then the above is a result of Kazhdan property (T). In this case more is true in particular in this case the constant $\kappa$ is independent of $\Gamma,$ see~\cite{Oh}. In general this theorem follows from~\cite{Cl} and~\cite{BS} in the number field case. It is known to experts that the function field case works out much the same way, however we were not able to find an account on this in the literature. 

The following is a consequence of the above theorem. 

\begin{prop}\label{m;equidistribution}
Let $s\in G^+$ be an element from class $\mathcal{A}$ and let $U=W_G^+(s).$ Then for any $f\in C_c^{\infty}(U),$ compactly supported locally constant function on $U,$ for any $\phi\in \cc(X)$ and any compact set $L\subset X$ there exists a constant $C=C(f,\phi, L)$ such that for all $x\in L$ and for any $n\geq0$ we have 
\begin{equation}\label{e;equi}\left|\int_Uf(u)\phi(s^nux)d\theta(u)\right|\leq Ce^{-n\kappa'}\end{equation} 
\end{prop}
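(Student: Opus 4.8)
The plan is to reduce \eqref{e;equi} to the exponential mixing estimate \eqref{e;exp-mixing} via a standard ``thickening'' or ``unfolding'' argument, in which the integral of $\phi$ along a piece of the expanding horospherical orbit $Us^nx$ is compared with an integral of $\phi$ against a fixed test function over a small neighbourhood in $G$, and then $s^n$ is used to spread this neighbourhood out so that the matrix coefficient decay takes over. First I would fix a deep enough congruence subgroup $\kcal^\ell$ so that \eqref{e;iwasawa-comp} holds, i.e. $\kcal^\ell=(\kcal^\ell)^-(\kcal^\ell)^0(\kcal^\ell)^+$, and so that $\kcal^\ell$ fixes $\phi$; since $U=W_G^+(s)$ we may also arrange $(\kcal^\ell)^+\subset U_0$. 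Choose $m$ large enough that the support of $f$ is contained in $U_m=s^mU_0s^{-m}$, so that after replacing $n$ by $n+m$ (which only changes the constant) we may assume $\supp(f)\subset U_0$.

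Next I would introduce, for a small parameter to be chosen, the test function on $G$ which is (a normalized multiple of) the indicator of the small ``box'' $B_\varepsilon=(\kcal^\ell\cap W^-(s))_{\!\le\varepsilon}\,(\kcal^\ell\cap Z_G(s))_{\!\le\varepsilon}\,(\kcal^\ell\cap W^+(s))$, pushed down to $X$ near the point $x$; call the resulting functions $\psi_\varepsilon$ on $X$. Because $\phi$ is $\kcal^\ell$-invariant and the $W^+$-direction sits inside $U_0$ which fixes $\phi$, for $\varepsilon$ small and $u\in\supp(f)$ one has the basic comparison
\[
\int_U f(u)\,\phi(s^nux)\,d\theta(u)\;=\;\frac{1}{\operatorname{vol}(B_\varepsilon)}\int_{B_\varepsilon}\Bigl(\int_U f(u)\,\phi(s^nu b x)\,d\theta(u)\Bigr)d b\;+\;O(\varepsilon),
\]
uniformly for $x\in L$, the error coming from the (uniform, since $L$ is compact and $\phi$ locally constant) modulus of continuity of $\phi$. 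Now conjugate: $s^n u b x = (s^n u s^{-n})(s^n b s^{-n})\,s^n x$, and $s^n u s^{-n}$ ranges over $U_{-n}=s^{-n}U_0s^n\subset U$ while $s^n b s^{-n}$ sweeps out a box whose $W^-$- and $Z_G$-parts have been contracted by $s^n$ but whose $W^+\subset U$ part is unchanged; absorbing the $W^+$-part into the $U$-integration and using invariance of $\theta$ one rewrites the right-hand side as $\langle \rho(g_n)\tilde f,\tilde\psi_\varepsilon\rangle$-type pairings, where $g_n\to\infty$ like $s^n$ and $\tilde f,\tilde\psi_\varepsilon\in\cc(X)$ are $\kcal$-finite with $\kcal$-span dimension bounded independently of $n$ (this is where the $L^2_{00}$ / $G^+$-fixed-vector decomposition and the hypothesis $s\in G^+$ enter, guaranteeing the relevant functions lie in $L^2_{00}$ once $\phi$ does). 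Applying Theorem~\ref{thm;exp-mixing} gives a bound $\le E'e^{-\kappa\|s^n\|}\le E'e^{-\kappa c n}$ for the main term, and optimizing $\varepsilon$ against this (taking $\varepsilon=e^{-\kappa' n}$ with $\kappa'$ a suitably small fraction of $\kappa c$) yields \eqref{e;equi} with the constant $C$ depending only on $f$, $\phi$ and $L$.

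The main obstacle is the bookkeeping in the conjugation/unfolding step: one must check that conjugating the box $B_\varepsilon$ by $s^n$ genuinely produces a configuration to which mixing applies — in particular that the $W^+(s)$-component of the box can be merged with the averaging family $\{U_m\}$ without distorting $\theta$, that the contracted $W^-$- and $Z_G(s)$-directions remain inside $\kcal^\ell$ (so the resulting function stays $\kcal^\ell$-invariant with controlled $\kcal$-type), and that all error terms are uniform over the compact set $L$. A secondary point to be careful about is that $f$ need not be nonnegative or an indicator, so the comparison above should be phrased for general locally constant compactly supported $f$ and one should track how $\|f\|_1$ and the ``width'' of $\supp(f)$ (i.e. the $m$ with $\supp f\subset U_m$) enter the constant; none of this is deep, but it is where the argument has to be written carefully.
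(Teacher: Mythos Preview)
Your approach is essentially the paper's: thicken the $U$-integral to an integral over a small box in $G$ using the factorization $\kcal^\ell=(\kcal^\ell)^-(\kcal^\ell)^0(\kcal^\ell)^+$, push it to a function $\psi_x$ on $X$, recognize the result as a matrix coefficient $\langle s^n\phi,\psi_x\rangle$, and apply Theorem~\ref{thm;exp-mixing}. The one place you overcomplicate things is the $\varepsilon$-parameter with an $O(\varepsilon)$ error and a final optimization step: that is the archimedean picture and is unnecessary here. The paper simply reduces to $f=\chi_{(\kcal^\ell)^+}$ and takes the thickening box equal to $\kcal^\ell$ with $\kcal^\ell\subset\kcal_\phi$; since $s^n u^- u^0 s^{-n}\in\kcal^\ell\subset\kcal_\phi$ for $n\ge 0$ (conjugation by $s^n$ contracts $(\kcal^\ell)^-$ and fixes $(\kcal^\ell)^0$), one has $\phi(s^n u^- u^0 u x)=\phi(s^n u x)$ \emph{identically}, so the comparison error is exactly zero and no optimization is needed. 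With that simplification your outline matches the paper's proof.
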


\begin{proof}
This is proved in~\cite{KM1} in the real case and the proof works in $S$-arithmetic setting also, we recall the proof in the positive characteristic case for the sake of completeness. Note that there is a compact open subgroup $U_f$ (resp. $\mathcal{K}_{\phi}$) of $U,$ (resp. $G$) which leaves $f$ (resp. $\phi$) invariant. 

As was mentioned above if $g\in G$ is close enough to the identity then $g=u^-u^0u^+$ where $u^\pm\in W_G^\pm(s)$ and $u^0\in Z_G(s).$ For $\ell$ large enough let $\kcal^\ell$ denote the $\ell$-th congruence subgroup of $\kcal$ and let $(\kcal^\ell)^\pm=W_G^\pm(s)\cap\kcal^\ell$ and $(\kcal^\ell)^0=Z_G(s)\cap\kcal^\ell.$  

Note that it suffices to prove the statement for $f$ which is the characteristic function of $U_f,$ we assume this is the case from now. 
Now since $L$ and $\kcal_\phi$ are compact by taking $\ell$ large, we may assume that the map from the $\ell=\ell(L,\kcal_\phi)$-th congruence subgroup $\mathcal{K}^\ell$ of $\mathcal{K}$ into $X$ is injective at $x$ for all $x\in L$ and that $\mathcal{K}^\ell\subset\mathcal{K}_{\phi}.$ Replacing $U_f$ by $U_f\cap(\kcal)^\ell$ we may and will assume that $U_f=(\mathcal{K}^{\ell})^+.$
Hence the map from $\mbox{supp}(f)$ to $X$ is injective at all $x\in L$. 

Let $f^-=\frac{1}{|(\mathcal{K}^{\ell})^-|}\chi_{(\mathcal{K}^{\ell})^-}$ and  $f^0=\frac{1}{|(\mathcal{K}^{\ell})^0|}\chi_{(\mathcal{K}^{\ell})^0},$ be the normalized characteristic functions of  $(\mathcal{K}^{\ell})^-$ and $(\mathcal{K}^{\ell})^0$ respectively. Define now 
$$\tilde{f}(u^-u^0)=f^-(u^-)f^0(u^0)$$ 
Let $n$ be a large positive number. Note that the restriction of the module function of the corresponding parabolic $(\mathcal{K}^{\ell})^0$ is trivial, thus we have
\begin{equation}\label{e;bring-u-u0}\left|\int_Uf(u)\phi(s^nux)d\theta(u)-\int_G\tilde{f}(u^-u^0)f(u)\phi(s^nu^-u^0ux)d(u^-u^0u)\right|=\end{equation}
$$\left|\int_Gf^-(u^-)f^0(u^0)f(u)(\phi(s^nux)-\phi(s^nu^-u^0ux))d(u^-u^0u)\right|=0$$
The last equality follows from the fact that $\kcal^\ell\subset\kcal_\phi.$ Define now $\psi=f^-f^0f$ this is a function supported in a small neighborhood of the identity and thanks to the fact that $\kcal^\ell$ maps injectively to a neighborhood of $x\in L$ for any $x\in L,$ we may also define $\psi_x(gx)=\psi(g).$ This is a (compactly supported) function in $C^{\infty}(X).$ Let ${(\psi_x)}_{00}\in\cc(X)$ be the component of $\psi_x$ in $L^2_{00}(X).$ We now use \eqref{e;bring-u-u0} and Theorem~\ref{thm;exp-mixing} and get 
$$\left|\int_Uf(u)\phi(s^nux)d\theta(u)\right|=\left|\langle s^n\phi,\psi_x\rangle\right|=|\langle s^n\phi,{(\psi_x)}_{00}\rangle|$$
$$\leq Ee^{-\kappa\|s^n\|}(\dim\langle\mathcal{K}\phi\rangle\dim\langle\mathcal{K}(\psi_x)_{00}\rangle)^{\frac{1}{2}}\leq E\h[\mathcal{K}:\mathcal{K^\ell}]\h e^{-n\kappa'}$$
The proposition is proved. 
\end{proof}

\begin{remark}\label{non-quantitaive}
Here we stated Theorem~\ref{thm;exp-mixing} and proved Proposition~\ref{m;equidistribution} in the quantitative form. It is worth mentioning that the qualitative version of Theorem~\ref{thm;exp-mixing} i.e. Howe-Moore theorem which is a consequence of Mautner phenomena can be found in the literature, see~\cite{BO} and references in there. Indeed one can then state and prove qualitative version of this proposition assuming only a qualitative version of Theorem~\ref{thm;exp-mixing}. In other words with notation as above for any $\vare>0$ there exits $n>0$ such that
\begin{equation}\label{e;equi'}\left|\int_Uf(u)\phi(s^nux)d\theta(u)\right|\leq\vare\end{equation} 

This qualitative statement suffices for the proof of Theorem~\ref{horospherical} in the way it is stated here. 
\end{remark}

\section{Non-divergence of unipotent flows} 
In the proof of Theorem~\ref{horospherical} we will need results concerning non-divergence of unipotent orbits in $X.$ Results of this sort were first established by Margulis' in the course of the proof of arithmeticity of non-uniform lattices. Later Dani, Dani-Margulis and Kleinbock-Margulis proved quantitative versions of these non-divergence result. We need an $S$-arithmetic version of~\cite{DM1}. In characteristic zero such $S$-arithmetic statements are proved in~\cite[Section 7]{GO}. Their proof, which is modeled on~\cite{DM1}, essentially works in our setting as well. However there are several points which require some explanation. For the sake of completeness we will reproduce the proof in positive characteristic in this section. 

Let the notation be as in the introduction. Let $\mathbb{A}$ be a maximal $K$-split torus of $\bbg$ and choose a system $\{\alpha_1,\ldots,\alpha_r\}$ of simple $K$-roots for $(\bbg,\mathbb{A})$. 
For each $1\leq i\leq r$ let $\mathbb{P}_i$ be the standard maximal parabolic subgroup of $\bbg$ corresponding to $\alpha_i.$ 
The subgroup $\mathbb{P}=\cap_{i}\mathbb{P}_i$ is a minimal $K$-parabolic subgroup of $\bbg.$ 

We need some results from reduction theory, see~\cite{Sp} and~\cite{Ha} for general results concerning reduction theory in the function field case. In particular we need the following which is Theorem D in~\cite{Be}, we thank A. Salehi Golsefidy for pointing out this reference to us. It is worth mentioning that this could also be proved using~\cite[Proposition 2.6]{Sp}. 
\begin{equation}\label{e;cusps}\mbox{There exists a finite set}\h\h F\subset\bbg(K)\h\h\mbox{such that}\h\h\bbg(K)=\mathbb{P}(K)F\hh\Gamma \end{equation}

Through out this section $U=U_\nu$ will denote a $K_\nu$-split unipotent subgroup of $G.$ We may and will assume $U\subset W_G^+(s)$ where $s$ is an element from class $\mathcal{A}.$ We will further assume that $U$ is normalized by $s.$ As before let $U_0=U\cap\kcal$ and define $U_m=s^mU_0s^{-m}.$ We will assume that $U$ is an $N$-dimensional $K_\nu$-group, thus there is an $s$-equivariant isomorphism of $K_\nu$-varieties between the Lie algebra of $\mathfrak{u}$ of $U$ onto $U,$ see~\cite[corollary, 9.12]{BS}. We will let $\Bfrak$ be the image of $U_0$ under this isomorphism. Let $\theta$ denote the Haar measure on $U$ normalized so that $\theta(U_0)=1.$ Abusing the notation we will let $\theta$ also denote the Haar measure on $K_\nu^N$ normalized so that $\theta(\Bfrak)=1.$ Let 
$$\pnd=\left\{\Theta: \Bfrak\rightarrow G:\h \Theta\h\mbox{is a polynomial map with degree at most}\h d\right\}$$ 
Note that there exists some $d$ such that for any $g\in G$ the map $u\mapsto ug$ is in $\pnd.$

The following is the main result of this section, see Theorem 2~\cite{DM1} and Theorem 7.1~\cite{GO}. 

\begin{thm}\label{non-divergence}
Let the notation be as above and let $\vare>0.$ There exists a compact subset $L\subset X$ such that for all $x=g\Gamma\in X$ one of the following holds
\begin{enumerate}
\item for all large $m,$ depending on $x,$
$$\theta(\{u\in U_m\h:\h ux\in L\})\geq (1-\vare)\theta(U_m)$$
\item there is some $\lambda\in F\hh\Gamma,$ such that $g^{-1}Ug\subset \lambda \mathbb{P}_i\lambda^{-1}.$
\end{enumerate} 
\end{thm}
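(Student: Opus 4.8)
The plan is to follow the strategy of Dani--Margulis \cite{DM1} (and its $S$-arithmetic adaptation in \cite{GO}), reducing the statement to a non-divergence estimate for polynomial maps into $G$ via reduction theory. First I would set up the combinatorial framework: for each subset $I\subset\{1,\dots,r\}$ let $\mathbb{P}_I$ be the corresponding standard parabolic and consider the homogeneous $K$-varieties $V_i$ on which $\bbg$ acts so that the stabilizer of a distinguished point $p_i$ is $\mathbb{P}_i$; concretely one takes a suitable exterior power of the standard representation and lets $p_i$ be a highest-weight vector, so that $g\mapsto g\cdot p_i$ detects membership in conjugates of $\mathbb{P}_i$. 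The key geometric input is reduction theory in the form \eqref{e;cusps}, $\bbg(K)=\mathbb{P}(K)F\Gamma$, which lets one cover $X$ outside a large compact set by finitely many ``cuspidal'' pieces indexed by $F\Gamma$ and by the maximal parabolics $\mathbb{P}_i$, each piece being controlled by the size of $\|g\lambda\cdot p_i\|$ for $\lambda\in F\Gamma$.

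The heart of the argument is the following dichotomy for the polynomial trajectory $u\mapsto u g\lambda\cdot p_i$, $u\in U_m$: either this function stays comparably large on a $(1-\vare)$-proportion of $U_m$ for all large $m$ (which, summed over the finitely many relevant $i$ and $\lambda$, forces $ux$ to remain in a fixed compact set $L$), or else there exist $i$ and $\lambda\in F\Gamma$ for which $\|u g\lambda\cdot p_i\|$ is \emph{bounded} on all of $U$ as $u$ ranges over $U$ --- and boundedness of a polynomial map on the full unipotent group $U$ forces it to be constant (here one uses that $U$ is Zariski-connected and that $s$-conjugation, together with $U_0=U\cap\kcal$, generates $U$; this is exactly where the filtration $U_m=s^mU_0s^{-m}$ and the polynomiality of $u\mapsto ug$ of bounded degree $d$ enter). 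Constancy of $u\mapsto u g\lambda\cdot p_i$ means $g^{-1}Ug\subset \lambda\mathbb{P}_i\lambda^{-1}$, which is alternative (2). To make the ``stays large'' branch yield a \emph{uniform} compact $L$ one runs the standard inductive argument of \cite{DM1} over chains of parabolics: one shows by downward induction on $\dim$ (equivalently, on the size of $I$) that for each $x$ there is a ``minimal'' parabolic-type datum governing the escape of mass, and the non-divergence estimate for $(\bbg,\mathbb{A})$-polynomial maps (the positive-characteristic analogue of the Kleinbock--Margulis / Dani--Margulis $(C,\alpha)$-good estimate) propagates control from smaller faces to larger ones.

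The main obstacle will be establishing the basic quantitative non-divergence estimate for polynomial maps $\Theta\in\pnd$ in the positive-characteristic setting: namely that for $\Theta\in\pnd$ and any $\rho>0$, the measure of $\{u\in\Bfrak : \|\Theta(u)\cdot p_i\|<\rho\}$ is small unless $\|\Theta(\cdot)\cdot p_i\|$ is itself everywhere $<$ some fixed multiple of $\rho$ on $\Bfrak$. Over $\bbr$ this rests on the fact that coordinates of such maps are polynomials of bounded degree, hence $(C,\alpha)$-good; over $K_\nu$ one must check that polynomials of bounded degree on $K_\nu^N$ satisfy the analogous good property with respect to the non-Archimedean absolute value and the additive Haar measure, and that the various norms $\|\cdot\|$ and heights transform admissibly under the $\kcal$-action and under $s$-conjugation. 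This is the point where \cite{GO} already did most of the work in characteristic zero and where, as the text warns, ``several points require explanation'' in characteristic $p$ --- primarily checking that the relevant representations of $\bbg$ and the highest-weight vectors $p_i$ are defined over $K$ and well-behaved (no issues with inseparability for the combinatorics we need, since $\bbg$ is semisimple and we only use the orbit maps into the $V_i$), and that the elementary inequalities for $|{\cdot}|_\nu$ of polynomials replace the real-variable lemmas used in \cite{DM1}. Once that estimate is in hand, the combinatorial/inductive packaging is essentially formal and follows \cite{DM1, GO} verbatim.
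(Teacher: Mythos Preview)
Your outline is correct and matches the paper's approach closely: the paper sets up the representations $(\rho_i,\bbv_i,v_i)$ and the functions $\Phi_i$, reduces Theorem~\ref{non-divergence} to a polynomial version (Theorem~\ref{non-div-poly}) via exactly the bounded-polynomial-is-constant argument you describe, then further reduces (using Ghosh's quantitative non-divergence~\cite{Gh}) to showing the polynomial orbit merely \emph{intersects} a compact set (Theorem~\ref{non-div}), which is handled by the admissible-sequences combinatorics of~\cite{GO} together with the $(C,\alpha)$-good property of polynomials over $K_\nu$ from~\cite{KT}. The one step you leave implicit but the paper spells out is the passage from ``for each large $m$ there is some $\lambda_m$ making $\Phi_i$ small on $U_m$'' to ``a single $\lambda$ works on all of $U$'': this uses that $gF\Gamma v_i$ is discrete in $V_i$, so only finitely many $\lambda$ modulo $\mathrm{Stab}(v_i)$ can occur, and a pigeonhole over the exhausting sequence $U_{m_j}$ then pins one down.
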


We will first make several reductions. Note that if we let $p:{\bbg}\rightarrow\overline{\bbg}$ be the $K$-central isogeny from $\bbg$ to the adgoint form $\overline{\bbg}$ of $\bbg,$ then  
the natural map from $G/\Gamma$ to $\overline{\bbg}(K_S)/\overline{\bbg}(\mathcal{O}_S)$ is a proper map. Thus the statement and the conclusion of Theorem~\ref{non-divergence} are not changed if we replace $\bbg$ by the adjoint form. Hence for the rest of this section we will assume $\bbg$ is adjoint form.  

We need some more notation. Recall that $\{\alpha_1,\ldots,\alpha_r\}$ is the set of simple simple $K$-roots for $(\bbg,\mathbb{A}).$ Since $\bbg$ is adjoint form they coincide with the fundamental weights. Note now that $\alpha_i$'s uniquely extend to a character of $\bbp_i$ which we will also denote by $\alpha_i.$ For any $1\leq i\leq r$ we fix, once and for all, representations $(\rho_i, \mathbb{V}_i, v_i)$ of $\bbg$ defined over $K$ and vectors $v_i\in \mathbb{V}_i(K)$ such that there are $K$-rational characters $\chi_i$'s so that 
\begin{equation}\label{e;chevalley}\bbp_i=\{g\in\bbg:\h gv_i=\chi_i(g)v_i\}\end{equation} 
We further assume $\chi_i=\alpha_i^{n_i}$ where $n_i$ is a positive integer.
We fix $\kcal_\nu$-invariant norms $\|\h\|_\nu$ on $\bbv_i(K_S)$ and assume $\|v_i\|_\nu=1$ for all $\nu\in S$ and all $1\leq i\leq r.$ Define $\Phi_i:G\rightarrow\bbr^\times$ by
\begin{equation}\label{e;standard-map}\Phi_i((g_\nu))=\prod_{\nu\in S}\|g_\nu v_i\|_\nu \end{equation} 
Since $\Phi_i(\hh)$ is left $\kcal$-invariant for all $i,$ it only depends on $P_i$ component of $g$ in the Iwasawa decomposition. Similar to~\cite{Sp}, representations $\rho_i$'s and $\Phi_i$'s will be used as `the standard representation of $\bbg$ corresponding to $\bbp_i$" which are used in reduction theory and are also extensively used in~\cite{DM1} and~\cite{GO}.   

We have the following

\begin{thm}\label{non-div-poly}
For any $\vare>0$ and $\alpha>0$ there exists a compact subset $L$ of $G/\Gamma$ such that for any $\Theta\in\pnd$ one of the following holds
\begin{enumerate}
\item $\theta(\{t\in \Bfrak\h:\h \Theta(t)\Gamma/\Gamma\in L\})\geq (1-\vare)$\vspace{1mm}
\item there exist $1\leq i\leq r$ and $\lambda\in F\Gamma$ such that $\Phi_i(\Theta(t)\lambda)<\alpha$ for all $t\in\Bfrak.$
\end{enumerate}
\end{thm}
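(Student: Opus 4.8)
The plan is to follow the Dani--Margulis strategy adapted to the $S$-arithmetic, positive-characteristic setting, essentially transcribing the argument of \cite[Theorem 7.1]{GO} while keeping track of the non-archimedean subtleties. The argument proceeds by induction on the number $r$ of simple $K$-roots (equivalently, on the semisimple $K$-rank), using the representations $(\rho_i,\bbv_i,v_i)$ and the maps $\Phi_i$ as a substitute for reduction theory. The key technical device is a quantitative estimate on polynomial maps: if $\Theta\in\pnd$ and $\Phi_i\circ\Theta$ is ``large'' on a substantial portion of $\Bfrak$ relative to all the translates $\Phi_i(\,\cdot\,\lambda)$, $\lambda\in F\Gamma$, then in fact it is bounded below everywhere on $\Bfrak$ up to a controlled constant. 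This is the content of the ``$(C,\alpha)$-good'' type lemmas: the functions $t\mapsto\|\rho_i(\Theta(t))w\|_\nu$ are, for each fixed vector $w$ and each place $\nu$, absolute values of polynomials of degree bounded in terms of $N$, $d$, and $\dim\bbv_i$, hence satisfy a Remez-type inequality on $\Bfrak\subset K_\nu^N$, and the product over $\nu\in S$ inherits such a bound.

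First I would record the required non-archimedean analogue of the ``$(C,\alpha)$-good'' property for products of absolute values of polynomials over $K_S$, together with the covering/Besicovitch-type lemma for $K_\nu^N$ that underlies the passage from a measure estimate on a ball to a measure estimate on the whole of $\Bfrak$; in positive characteristic one must be a little careful because balls in $K_\nu^N$ are clopen and the geometry of coverings is ultrametric, but this actually makes the combinatorics cleaner, not harder, and the relevant statements are available in the $S$-arithmetic literature (e.g. via \cite{KM1}-type arguments). Second, using \eqref{e;cusps} I would reduce the cusp structure of $X=G/\Gamma$ to the standard description: a point $g\Gamma$ is ``high in the cusp'' exactly when $\Phi_i(g\lambda)$ is small for some $i$ and some $\lambda\in F\Gamma$, with the compact sets $L_t=\{x : \Phi_i(g\lambda)\geq t \text{ for all }i,\lambda\}$ exhausting $X$ as $t\to0$. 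Third comes the inductive step: given $\Theta\in\pnd$, either for every $i$ and every $\lambda\in F\Gamma$ the function $\Phi_i(\Theta(\cdot)\lambda)$ attains a value $\geq\alpha$ somewhere on $\Bfrak$ — in which case the $(C,\alpha)$-good property plus the covering lemma yield that $\Theta(t)$ stays in the compact set $L$ for a $(1-\vare)$-proportion of $t\in\Bfrak$, giving alternative (1) — or there is some $i_0$ and $\lambda_0$ with $\Phi_{i_0}(\Theta(t)\lambda_0)<\alpha$ for all $t$, which is alternative (2).

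The subtle point, and the one I expect to be the main obstacle, is the book-keeping that makes the induction actually close: when $\Phi_{i_0}\circ\Theta$ is uniformly small one cannot simply stop — one must project into the Levi quotient of the parabolic $\lambda_0\bbp_{i_0}\lambda_0^{-1}$ (equivalently, work in the representation $\bbv_{i_0}$ modulo the line $K v_{i_0}$), observe that the image of $U$ is again a $K_\nu$-split unipotent subgroup normalized by (the image of) an element from class $\mathcal{A}$, and that the image of $\Theta$ is again a polynomial map of bounded degree, so that the inductive hypothesis applies to a group of strictly smaller $K$-rank. Getting the norms, the normalizations $\|v_i\|_\nu=1$, and the choice of $\chi_i=\alpha_i^{n_i}$ to cooperate across all $\nu\in S$ simultaneously — so that a lower bound on one $\Phi_j$ in the smaller group pulls back to a genuine lower bound on the corresponding $\Phi_j$ upstairs, with the finite set $F$ enlarged only finitely and compatibly — is exactly the part where the proof in \cite{GO} ``essentially works but requires explanation,'' and it is where I would spend most of the effort. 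Once the induction closes, Theorem \ref{non-divergence} follows from Theorem \ref{non-div-poly} by taking $\Theta(u)=ug$ (permissible since $u\mapsto ug$ lies in $\pnd$ for suitable $d$), translating alternative (2) into the statement $g^{-1}Ug\subset\lambda\bbp_i\lambda^{-1}$, and averaging over the filtration $\{U_m\}$.
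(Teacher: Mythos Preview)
Your outline departs from the paper's argument in one structural way and contains one genuine confusion.

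The paper does not attack the measure estimate~(1) directly. It factors Theorem~\ref{non-div-poly} into two pieces: the quantitative non-divergence result of Kleinbock--Margulis type, quoted as a black box from~\cite{Gh} as Theorem~\ref{kleinbock-margulis}, together with a weaker ``intersection'' statement, Theorem~\ref{non-div}, which only asks that $\Theta(\Bfrak)\Gamma/\Gamma$ meet $L$ (or else alternative~(2) holds). The reduction is immediate: once the trajectory hits $L$ at a single point, Theorem~\ref{kleinbock-margulis} upgrades this to the $(1-\vare)$-measure bound. The substance is then in Theorem~\ref{non-div}, which is handled by the ``admissible sequence'' machinery of~\cite[\S7]{GO}: one iteratively applies the polynomial dichotomy of Proposition~\ref{polynomial3} to the family $\{\Phi_i(\Theta(\cdot)\lambda)\}$, at each stage either locating the desired point or pinning one more $\Phi_{i_j}(\cdot\,\lambda_j)$ into a bounded range on a sub-ball; after at most $r$ steps the process terminates, and Proposition~\ref{w-compact} certifies that the resulting sets $W_{\alpha,a,b}(\xi)$ have relatively compact image in $G/\Gamma$. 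There is no projection to a Levi quotient and no induction on $K$-rank anywhere in this proof.

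The confusion is in your fourth paragraph: alternative~(2) is a \emph{terminal} outcome of Theorem~\ref{non-div-poly}, not the trigger for an inductive descent. If $\Phi_{i_0}(\Theta(t)\lambda_0)<\alpha$ for all $t\in\Bfrak$ you are finished; nothing needs to be projected. (You may be conflating this with the later induction on $\dim\bbg$ in the proof of Theorem~\ref{horospherical}(ii), which is a separate argument about measure classification, not about non-divergence.) Relatedly, your third paragraph asserts that ``$(C,\alpha)$-good plus the covering lemma'' already yields alternative~(1) once every $\Phi_i(\Theta(\cdot)\lambda)$ attains a value $\geq\alpha$ somewhere on $\Bfrak$; but the family indexed by $\lambda\in F\Gamma$ is infinite, and controlling the simultaneous lower bound over this infinite family is exactly the non-trivial content hidden either in Theorem~\ref{kleinbock-margulis} or in the admissible-sequence iteration behind Theorem~\ref{non-div}. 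That is where the real work sits, and your outline passes over it.
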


{\bf Proof of Theorem~\ref{non-divergence} module Theorem~\ref{non-div-poly}.} Let $\vare$ and $\alpha$ be given and let $L$ be the compact set obtained as in the Theorem~\ref{non-div-poly} for this $\vare$ and $\alpha.$ Assume now that (1) in the Theorem~\ref{non-divergence} does not hold, thus there exists a sequence $m_j\rightarrow\infty$ such that loc. cit. fails for $U_{m_j}.$ As we mentioned before there exists some $d$ such that the map $u\mapsto ug$ is in $\pnd$ for all $U_{m_j}.$ We denote these maps by $\Theta_j.$ Now  Theorem~\ref{non-div-poly} implies that there exist some $1\leq i\leq r$ and $\lambda_j\in F\Gamma$ such that $\Phi_i(\Theta_j(t)\lambda_j)<\alpha$ for all $t\in\Bfrak$ and all $j$ i.e. if $u\in U_{m_j}$ then $\Phi_i(ug\lambda_j)<\alpha.$ In particular we have  $\Phi_i(g\lambda_j)<\alpha$ for all $j.$ Note also that $gF\Gamma v_i$ is a discrete set hence there are only finitely many $\lambda$'s module the stabilizer of $v_i$ such that $\Phi_i(ug\lambda)<\alpha.$ Since $U_{m_j}$'s exhaust $U$ the above discussion implies that there exits some $\lambda\in F\hh\Gamma$ such that $\Phi_i(ug\lambda)<\alpha$ for all $u\in U.$
If we utilize the isomorphism $f$ between $U$ and ${K_\nu}^N,$ then the map $\rho_i(ug\lambda)v_i$ is a polynomial map from $K_\nu^N$ into $V_i.$ Hence either it is unbounded or constant. Since $\Phi_i(ug\lambda)<\alpha$ we get $\rho_i(ug\lambda)v_i$ is constant which using ~\eqref{e;chevalley} implies that $\lambda^{-1}g^{-1}Ug\lambda\subset \bbp_i.$

Let us recall the following quantitative non-divergence result.

\begin{thm}\label{kleinbock-margulis}(cf.~\cite[Theorem, 4.3]{Gh}) 
For any given compact subset $L\subset G/\Gamma$ and $\vare>0$ there exists a compact subset $L'\subset G/\Gamma$ such that for any $\Theta\in\pnd$ and any $y\in G/\Gamma$ such that $\Theta(\Bfrak)y\cap L\neq\emptyset$ we have
$$\theta(\{t\in \Bfrak:\h \Theta(t)y\in L'\})\geq (1-\vare)$$ 
\end{thm}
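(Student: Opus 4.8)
The plan is to deduce this "compact-to-compact" non-divergence statement from the Kleinbock--Margulis machinery of $(C,\alpha)$-good functions, exactly as in the number-field case of~\cite{Gh} and~\cite{GO}. The key point is that the family $\pnd$ consists of polynomial maps of bounded degree, so for each of the finitely many standard representations $(\rho_i,\bbv_i,v_i)$ the functions $t\mapsto\|\rho_i(\Theta(t)g)v_i\|_\nu$ (more precisely their products over $\nu\in S$, i.e. $t\mapsto\Phi_i(\Theta(t)g)$) are $(C,\alpha)$-good on $\Bfrak$ with constants $C,\alpha$ depending only on $N$ and $d$, uniformly in $\Theta$ and in $g$. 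First I would record this uniform $(C,\alpha)$-good property, together with the fact that the $\Phi_i$ only depend on the Iwasawa $P_i$-component, so that the function $\Delta(x)=\max_i\Phi_i$-type quantity (or rather its reciprocal, measuring how far into the cusp $x$ is) is controlled by these polynomials along $\Theta(\Bfrak)$.

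Next I would invoke reduction theory in the form~\eqref{e;cusps}: a point $x=\Theta(t)y$ lies in the cusp precisely when $\Phi_i(\Theta(t)y\lambda)$ is small for some $1\le i\le r$ and some $\lambda\in F\Gamma$, and the compact sets $L$ in $G/\Gamma$ are, up to proper maps and the passage to the adjoint form, described by lower bounds $\Phi_i(x\lambda)\ge\beta$ for all relevant $i,\lambda$. So given the compact set $L$, choose $\beta>0$ with $L\subset\{x:\Phi_i(x\lambda)\ge\beta\text{ for all }i,\lambda\in F\Gamma\}$. The hypothesis $\Theta(\Bfrak)y\cap L\neq\emptyset$ then gives a single $t_0\in\Bfrak$ with $\Phi_i(\Theta(t_0)y\lambda)\ge\beta$ for all $i$ and $\lambda$. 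Now apply the basic Kleinbock--Margulis lemma: for a $(C,\alpha)$-good function $\varphi$ on $\Bfrak$ (here $\varphi(t)=\Phi_i(\Theta(t)y\lambda)$) that is $\ge\beta$ somewhere, the measure of $\{t\in\Bfrak:\varphi(t)<\rho\}$ is at most $C(\rho/\beta)^{\alpha}\theta(\Bfrak)$. Summing over the finitely many pairs $(i,\lambda)$ that can ever contribute — and here one uses, as in the proof of Theorem~\ref{non-divergence} modulo Theorem~\ref{non-div-poly}, that $yF\Gamma v_i$ is discrete so only finitely many $\lambda$ (mod stabilizer of $v_i$) can make $\Phi_i$ small anywhere on the bounded-in-cusp region reached from $t_0$ — one gets that $\theta(\{t:\Theta(t)y\not\in L'\})\le\vare\,\theta(\Bfrak)$ provided $\rho=\rho(\vare,\beta,C,\alpha)$ is chosen small enough, where $L'$ is the compact set cut out by the bounds $\Phi_i(\cdot\,\lambda)\ge\rho$.

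The main obstacle I anticipate is the bookkeeping needed to make the dependence on $\Theta$ and on $y$ uniform while keeping the set of relevant $\lambda\in F\Gamma$ finite: a priori $\Theta(\Bfrak)y$ could wander near infinitely many cusp neighborhoods, and one must use the boundedness of $\Theta(\Bfrak)y$ in the "depth" direction (guaranteed by the $(C,\alpha)$-good estimates once we know $\Theta(t_0)y$ is not deep) to cut this down to finitely many, exactly as Kleinbock--Margulis and~\cite{GO} do via the combinatorics of the simplicial complex of admissible subgroups. In the positive-characteristic setting the only genuinely new points are that one must work over $K_S=\prod_{\nu}K_\nu$ rather than $\bbr$, so the $(C,\alpha)$-good property must be taken in the $S$-arithmetic form of~\cite{Gh}, and that reduction theory is invoked in the function-field form~\eqref{e;cusps} of~\cite{Be},~\cite{Sp}; neither of these affects the structure of the argument. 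I would therefore present the proof as: (1) uniform $(C,\alpha)$-goodness of $t\mapsto\Phi_i(\Theta(t)g)$; (2) translate $L$, $L'$ into $\Phi_i$-bounds via reduction theory; (3) apply the $(C,\alpha)$-good sublevel estimate and sum over the finitely many relevant $(i,\lambda)$.
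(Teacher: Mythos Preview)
The paper does not give a proof of this theorem at all: it is simply quoted as an established result from~\cite[Theorem~4.3]{Gh} (Ghosh's positive-characteristic version of the Kleinbock--Margulis quantitative non-divergence theorem), and is then used as a black box to reduce Theorem~\ref{non-div-poly} to Theorem~\ref{non-div}. So there is nothing in the paper to compare your proposal against; you have written substantially more than the paper does.

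That said, a comment on your sketch on its own terms. Steps (1) and (2) are the right ingredients. Step (3), however, is not a faithful account of the Kleinbock--Margulis argument, and you correctly identify this yourself as ``the main obstacle'': the set of relevant $\lambda\in F\Gamma$ is \emph{not} finite uniformly in $\Theta$ and $y$, and one cannot simply ``sum over the finitely many relevant $(i,\lambda)$''. The actual proof in~\cite{Gh} (following~\cite{KM1},~\cite{KT}) does not reduce to a finite sum but uses a covering argument on $\Bfrak$ by ``protecting'' balls on which a single dangerous $(i,\lambda)$ is active, together with a Besicovitch-type covering lemma and an induction on the flag structure. Your paragraph acknowledging the obstacle gestures at this (``the combinatorics of the simplicial complex of admissible subgroups''), but the body of your step (3) does not match it. If you want to actually supply a proof rather than cite~\cite{Gh}, you would need to reproduce that covering/induction mechanism, not the summation you describe.
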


Using Theorem~\ref{kleinbock-margulis} the proof of Theorem~\ref{non-div-poly} reduces to the following 

\begin{thm}\label{non-div}
For any $\alpha>0$ there exists a compact subset $L$ of $G/\Gamma$ such that for any $\Theta\in\pnd$ one of the following holds
\begin{enumerate}
\item $\Theta(\Bfrak)\Gamma/\Gamma\cap L\neq\emptyset$\vspace{1mm}
\item there exist $1\leq i\leq r$ and $\lambda\in F\Gamma$ such that $\Phi_i(\Theta(t)\lambda)<\alpha$ for all $t\in\Bfrak.$
\end{enumerate}
\end{thm}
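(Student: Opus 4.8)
The plan is to prove Theorem~\ref{non-div} following the Dani--Margulis strategy as adapted to the $S$-arithmetic function field setting in~\cite{GO}. The key combinatorial device is an induction on the rank $r$ of $\bbg$ that keeps careful track of the ``distinguished'' collections of reduction-theoretic data. First I would set up the relevant partially ordered structure: using the reduction theory data~\eqref{e;cusps}, one considers, for a given polynomial trajectory $\Theta\in\pnd$, the functions $t\mapsto\Phi_i(\Theta(t)\lambda)$ for $1\le i\le r$ and $\lambda$ ranging over $F\Gamma$ modulo the stabilizer of $v_i$; these are coordinates of polynomial maps $t\mapsto\rho_i(\Theta(t)\lambda)v_i$ of bounded degree from $\Bfrak\cong K_\nu^N$ into $\bbv_i(K_S)$. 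The core analytic input is a one-dimensional polynomial estimate: a $(C,\alpha)$-good-type inequality bounding the measure of the set where such a polynomial of bounded degree is small in terms of its sup-norm on $\Bfrak$ — this is exactly the mechanism underlying Theorem~\ref{kleinbock-margulis} and is where the non-archimedean places need the standard adaptation (polynomials over $K_\nu$ with $\mathrm{char}>0$ still satisfy good inequalities with constants depending only on the degree, because the relevant statement is about the measure of sublevel sets and reduces to a compactness/covering argument on $\Bfrak$).

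The main step is the dichotomy argument. Fix $\alpha>0$. Suppose (2) fails, i.e.\ for every $i$ and every $\lambda\in F\Gamma$ there is some $t_\lambda\in\Bfrak$ with $\Phi_i(\Theta(t_\lambda)\lambda)\ge\alpha$. I would then run the Dani--Margulis machinery: for each maximal parabolic $\bbp_i$ one shows, using the good polynomial inequalities together with the basic algebraic fact that the $\Phi_i$ control the Iwasawa $P_i$-component, that either $\Theta(\Bfrak)\Gamma$ meets a fixed compact set (giving (1)), or one can pass to a lower-rank situation — concretely, one finds a place where the trajectory is trapped in a proper parabolic and then replaces $\bbg$ by an appropriate Levi factor and $\Gamma$ by its intersection, invoking the induction hypothesis. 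The finiteness statement ``$gF\Gamma v_i$ is discrete, so only finitely many $\lambda$ modulo $\mathrm{Stab}(v_i)$ give $\Phi_i<\alpha$'', already used in the deduction of Theorem~\ref{non-divergence} from Theorem~\ref{non-div-poly}, is what makes the bookkeeping finite and the induction well-founded. The reduction to the adjoint form (so that the simple roots $\alpha_i$ coincide with the fundamental weights, hence the $\chi_i=\alpha_i^{n_i}$ in~\eqref{e;chevalley} make sense with the $\Phi_i$ genuinely separating the walls) has already been made, so one may freely use that the $\Phi_i$ behave multiplicatively and monotonically along the relevant one-parameter subgroups.

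The hard part will be the inductive step across places: in the $S$-arithmetic setting the trajectory may fail to return to the compact core ``because of'' different places $\nu\in S$ simultaneously, and one must argue that there is a single maximal parabolic $\bbp_i$ (defined over $K$) and a single $\lambda\in F\Gamma$ handling all of them — this is precisely the point that requires the product formula / reduction theory~\eqref{e;cusps} over the function field $K$ and the product structure~\eqref{e;standard-map} of $\Phi_i$ over $S$. I would handle this the way~\cite[Section 7]{GO} does in characteristic zero: choose the compact set $L$ as a sublevel set $\{x=g\Gamma:\Phi_i(g\lambda)\ge\beta\ \forall i,\lambda\}$ for a sufficiently small $\beta=\beta(\alpha)$ determined by the good-polynomial constants and the degree $d$, verify that failure of (1) forces, for \emph{some} fixed pair $(i,\lambda)$, the polynomial $t\mapsto\Phi_i(\Theta(t)\lambda)$ to be everywhere below a threshold controllable in terms of $\beta$ and the good constants, and then feed that back through the good inequality to contradict the assumed smallness threshold $\alpha$ unless the polynomial is in fact constant — at which point~\eqref{e;chevalley} forces the parabolic containment of conclusion (2). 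The remaining routine points — uniformity of the good-polynomial constants over the finitely many $\nu\in S$ and over $\Theta\in\pnd$, and the proper-map reduction to the adjoint form — I would only sketch, referring to~\cite{DM1,GO,Gh} for the archimedean prototypes and noting that positive characteristic changes nothing essential because every estimate is at the level of measures of sublevel sets of bounded-degree polynomials.
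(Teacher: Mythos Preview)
Your plan to follow the Dani--Margulis/Gorodnik--Oh framework is the route the paper takes, but the specific mechanism you describe does not match, and the central step is not actually supplied.

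The paper does \emph{not} proceed by induction on the $K$-rank of $\bbg$ via passage to a Levi factor of a proper parabolic. Following~\cite{GO}, it instead introduces the combinatorics of \emph{admissible sequences} $\xi=((i_1,\lambda_1),\ldots,(i_l,\lambda_l))$ with $\lambda_{j-1}^{-1}\lambda_j\in\Lambda(\{i_1,\ldots,i_{j-1}\})$, defines sets $W_{\alpha,a,b}(\xi)\subset G$ on which all $\Phi_j(\,\cdot\,\lambda)$ with $(j,\lambda)\in\ccal(\xi)$ are $\ge\alpha$ and all $\Phi_i(\,\cdot\,\lambda)$ with $(i,\lambda)\in\supp(\xi)$ lie in $[a,b]$, and proves these project to relatively compact subsets of $G/\Gamma$ (Proposition~\ref{w-compact}, using Lemma~\ref{go-7.6} and Lemma~\ref{7.8go}). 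The induction is on the length $l\le r$ of $\xi$, carried out entirely inside $G$; one never replaces $\bbg$ by a Levi.

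The step you gloss over --- passing from ``for every $t$ some $(i,\lambda)$ has $\Phi_i(\Theta(t)\lambda)<\beta$'' to ``there is a \emph{single} $(i,\lambda)$ controlling all $t$, at least on a sub-ball'' --- is exactly what requires the polynomial-family dichotomy recorded here as Proposition~\ref{polynomial3}: given a family $\fcal\subset\pnmd$ that is locally finite below the threshold and whose every member attains $\ge\alpha$, either some point has all members $\ge\alpha$, or there is a sub-ball $B_0$ and a distinguished $F_0\in\fcal$ with $|F_0(B_0)|_\nu\subset[\alpha/R,R\alpha]$ and $\sup_{B_0}|F|_\nu\ge\alpha/R$ for all $F\in\fcal$. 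This is how one extends $\xi$ by one term and shrinks the ball, losing a fixed factor $R$ at each of at most $r$ steps; the compact set $L$ is then built from the sets $W_{\alpha,a,b}(\xi)$. A bare $(C,\alpha)$-good inequality~\eqref{good-func} does not deliver this; the non-archimedean Lemmas~\ref{polynomial1} and~\ref{polynomial2} (constancy of $|F|_\nu$ on a $\tfrac{1}{q_\nu}$-sub-ball around a maximum, and uniform comparison of suprema on nested balls) are also needed.

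Finally, your last paragraph conflates conclusion~(2) of this theorem with the constancy/parabolic-containment conclusion of Theorem~\ref{non-divergence}. Here (2) asserts only $\Phi_i(\Theta(t)\lambda)<\alpha$ for all $t\in\Bfrak$; the polynomial need not be constant, and no appeal to~\eqref{e;chevalley} is made in this proof. That deduction occurs later, in the passage from Theorem~\ref{non-div-poly} to Theorem~\ref{non-divergence}.
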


The proof of Theorem~\ref{non-div} will occupy the rest of this section. Some more notation is needed. For any $1\leq i\leq r$ we let $\bbq_i=\{p\in\bbp_i:\h\alpha_i(p)=1\}$ and let $\bba_i=\{a\in\bba:\h\alpha_j(a)=1, \forall j\neq i\}.$ For any subset $I\subset\{1,2\ldots,r\}$ we let
$$\bbp_I=\cap_{i\in I}\bbp_i,\h\h \bbq_I=\cap_{i\in I}\bbq_i,\h\h \bba_I=\prod_{i\in I}\bba_i$$
$\bbq_I$ is a normal $K$-subgroup of $\bbp_I,$ $\bba_I$ is a $K$-split torus and $\bbp_I=\bba_I\bbq_I.$ Let $\bbu_I$ be the unipotent radical of $\bbp_I$ and let $\bbh_I$ be the centralizer of $\bba_I$ in $\bbq_I.$ We have $\bbq_I=\bbh_I\bbu_I$ and $\bbh_I$ and $\bbu_I$ are defined over $K$ and $\bbu_I$ is $K$-split. As usual we let $P_I=\bbp_I(K_S),\h Q_I=\bbq_I(K_S),\h A_I=\bba_I(K_S)\h H_I=\bbh_I(K_S),\h U_I=\bbu_I(K_S).$ 

Since $\bbg$ is adjoint for any subset $I\subset\{1,2\ldots,r\}$ we have  
\begin{equation}\label{e;weight-isomo}\mbox{The map}\h\h{(\alpha_i)}_{i=1}^r:\bba_I\rightarrow\bbg_m^{|I|}\h\h\mbox{is an isomorphism of}\h\h K\mbox{-varieties}\end{equation} 
For any $\nu\in S$ fix a uniformizer $\varpi_\nu\in K\cap \ocal_\nu$ and let $$A_\nu^0=\{a\in\bba(K_\nu):\h \alpha_i(a)\in\varpi_\nu^\bbz,\h\h\mbox{for all}\h i=1,\ldots,r\}$$
Note that by our choice of $\varpi_\nu$ and from ~\eqref{e;weight-isomo} we get that $A_\nu^0\subset\bba(K)$ for all $\nu\in S.$ Let $A^0=\prod_{\nu\in S}A_\nu^0.$ For any subset $I\subset\{1,2\ldots,r\}$ we let 
$\azs=A_I\cap A^0.$ We may and will assume that $\kcal$ is chosen so that $A=\kcal\azs.$ We let 
$$\uas=\left\{a\in A^0:\h \Phi_i(a)=1,\h\h\mbox{for all}\h i=1,\ldots,r\right\}$$
and let $\uais=\uas\cap A_I.$

Recall that $\Gamma$ is a congruence subgroup of $\bbg(\ocal_S).$ Hence the choice of $\varpi_\nu,$ the fact that $\chi_i=\alpha_i^{n_i}$ and~\eqref{e;weight-isomo} imply that
\begin{equation}\label{e;azn-compact}\mbox{There is a compact subset}\h\h Y\subset A_I\h\h\mbox{such that}\h\h \uais\subset Y(A_I\cap\Gamma)\end{equation}
We have the following
\begin{lem}\label{go-7.6}
Let $I\subset\{1,2,\ldots,r\},$ $j\in\{1,2,\ldots,r\}\setminus I$ and $0<a<b$ be given. Then there exists a compact subset $M_0$ of $Q_I$ such that
$$\{g\in Q_I:\h \Phi_j(g)\in[a,b]\}\subset M_0Q_{I\cup\{j\}}(A_j\cap\Gamma)$$
\end{lem}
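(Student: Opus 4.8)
The plan is to reduce the statement to a structural fact about the group $Q_I$ together with the compactness estimate \eqref{e;azn-compact}. First I would use the decomposition $Q_I = H_I U_I$ and the fact that $A_j \subset A_I$ centralizes $H_I$: writing $Q_{I\cup\{j\}} = Q_I \cap Q_j$ and noting $\alpha_j$ is trivial on $H_{I\cup\{j\}}$, I would split the ambient group $Q_I$ according to its position relative to the one extra parabolic condition coming from $j$. Concretely, since $\bbp_I = \bba_I \bbq_I$ and $\bba_{I\cup\{j\}} = \bba_I \cdot \bba_j$ (an almost-direct product by \eqref{e;weight-isomo}), every $g\in Q_I$ can be written, up to a bounded factor, as $g = g' a_j$ with $g' \in Q_{I\cup\{j\}}$ and $a_j$ ranging over the rank-one torus piece $A_j \cap A^0$ on which $\Phi_j$ is the only nontrivial character. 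The key point is that the map $a_j \mapsto \Phi_j(a_j)$ is, after the identification \eqref{e;weight-isomo}, essentially $a_j \mapsto |\alpha_j(a_j)|^{n_j}$, a proper map on $A_j^0/(A_j\cap\uas)$.

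The main steps, in order, would be: (1) decompose $g\in Q_I$ as $g = k\, a\, q$ using the Cartan/Iwasawa decomposition $A = \kcal \azs$ restricted to $Q_I$, so that $\Phi_j(g)$ depends only on the $A$-part $a$; since $\Phi_j$ is left-$\kcal$-invariant, the condition $\Phi_j(g)\in[a,b]$ becomes a condition $\Phi_j(a)\in[a,b]$ on the torus component. (2) Using \eqref{e;weight-isomo} and the choice of uniformizers, translate this into: the $A_j$-component of $a$ (in the splitting $A_I^0 = A_{I\cup\{j\}}^0 \cdot A_j^0$ valid mod $\uais$-type factors) lies in a fixed compact subset of $A_j^0/(A_j\cap\uas)$. (3) Invoke \eqref{e;azn-compact} applied to $A_j$ (or rather to the relevant sub-torus indexed by $\{j\}$) to absorb the $\uas$-ambiguity into a compact set times $(A_j\cap\Gamma) \subset Q_{I\cup\{j\}}(A_j\cap\Gamma)$ — note $A_j \cap \Gamma$ normalizes things appropriately and the unipotent radical $U_I$ of $Q_I$ sits inside $Q_{I\cup\{j\}}$ up to the torus direction. (4) Collect all the bounded factors: the $\kcal$-part, the compact set from \eqref{e;azn-compact}, the bounded portion of the torus coming from $[a,b]$ being a compact interval away from $0$ and $\infty$, and the $H_I$-part which is already inside $Q_{I\cup\{j\}}$. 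Their product is the desired compact set $M_0$.

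The hard part will be step (3): making the passage from the torus statement back to a statement about all of $Q_I$ while correctly tracking how the unipotent radical $U_I$ and the reductive part $H_I$ interact with the single extra parabolic condition for $j$ — in particular verifying that $U_I$ (minus the root directions killed by passing from $I$ to $I\cup\{j\}$) is genuinely contained in $Q_{I\cup\{j\}}$, and that the $(A_j\cap\Gamma)$ factor produced by \eqref{e;azn-compact} can be moved to the right past the compact pieces without destroying compactness. This is exactly the place where the adjoint-form assumption and \eqref{e;weight-isomo} are essential, since they guarantee the torus $\bba_I$ splits cleanly along the simple roots and that $\Phi_j$ restricted to $A_j$ is proper modulo $\uas$. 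The remaining steps are routine manipulations with the Iwasawa decomposition and the properness of $\Phi_j$ on the rank-one torus.
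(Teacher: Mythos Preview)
Your plan is essentially the paper's proof, though the paper executes it more cleanly and your description contains a notational slip: you write ``$A_j \subset A_I$ centralizes $H_I$'', but by definition $\bba_I = \prod_{i\in I}\bba_i$ and $j\notin I$, so $A_j$ is \emph{not} contained in $A_I$; rather $A_j$ is precisely the extra rank-one direction one picks up in passing from $I$ to $I\cup\{j\}$. Similarly your splitting ``$A_I^0 = A_{I\cup\{j\}}^0\cdot A_j^0$'' is backwards.

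The paper's key simplification, which dissolves what you flag as the ``hard part'', is the single structural identity
\[
Q_I \;=\; (\kcal\cap Q_I)\, A_j\, Q_{I\cup\{j\}}.
\]
This comes from the Iwasawa decomposition $H_I = (\kcal\cap H_I)(P_j\cap H_I)$ of the reductive Levi $H_I$ together with the observation $P_j\cap Q_I = A_j\, Q_{I\cup\{j\}}$. Once you have this, writing $g=kaq$ gives $\Phi_j(g)=\Phi_j(a)$ immediately by left $\kcal$-invariance and the triviality of $\Phi_j$ on $Q_{I\cup\{j\}}$; the torus element $a\in A_j$ with $\Phi_j(a)\in[a,b]$ lies in $Y'\uais[j]$ for some compact $Y'\subset A_j$, and then \eqref{e;azn-compact} (applied with $I=\{j\}$) gives $a\in Y'Y(A_j\cap\Gamma)$. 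Finally, since $A_j$ normalizes $Q_{I\cup\{j\}}$, the $(A_j\cap\Gamma)$ factor slides to the right past $q$, and one takes $M_0=(\kcal\cap Q_I)Y'Y$. There is no need to worry separately about $U_I$ or to track which root subgroups survive: the decomposition above already places everything except the compact part and the $A_j$-factor inside $Q_{I\cup\{j\}}$.
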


\begin{proof}
Note that $H_I=(\kcal\cap H_I)(P_j\cap H_I).$ Hence we have $Q_I=H_IU_I=(\kcal\cap H_I)(P_j\cap H_I)U_I\subset (\kcal\cap Q_I)(P_j\cap Q_I).$ Note also that $P_j\cap Q_I=A_jQ_{j\cup I}.$ Thus we have $Q_I=(\kcal\cap Q_I)A_jQ_{j\cup I}.$ Now let $g\in Q_I$ such that $\Phi_j(g)\in[a,b]$ then we have $q=kaq$ where $k\in\kcal\cap Q_I,\h a\in A_j,\h q\in Q_{j\cup I}.$ Hence we have $\Phi_j(g)=\Phi_j(a)\in [a,b]$ which is to say $g\in(\kcal\cap Q_I)\{a\in A_j:\h \Phi_j(a)\in[a,b]\}Q_{j\cup I}.$ Note that there exists a compact subset $Y'\subset A_j$ such that $\{a\in A_j:\h \Phi_j(a)\in[a,b]\}\subset Y'A_{j}^{(1)}.$ Hence using~\eqref{e;azn-compact} we have $\{a\in A_j:\h \Phi_j(a)\in[a,b]\}\subset Y'Y(A_j\cap\Gamma)$ where $Y$ is as in~\eqref{e;azn-compact}. Since $A_j$ normalizes $Q_{j\cup I}$ the lemma thus follows with $M_0=(\kcal\cap Q_I)Y'Y.$
\end{proof}

For $I\subset\{1,\ldots,r\}$ there is a finite subset $\tilde{F}_I\subset\bbq_I(K)$ such that 
\begin{equation}\label{qi-cusps}\bbq_I(K)=(\bbp\cap\bbq_I)(K)\tilde{F}_I(Q_I\cap\Gamma)\end{equation} 
Recall that $\bba_I$ normalizes $\bbq_I$ and elements with bounded denominators in $\bbq_I(K)$ lie in finitely many cosets of $Q_I\cap \Gamma.$ Thus we may find a finite subset $F_I\subset\bbq_I(K)$ such that
\begin{equation}\label{lambdai1}(A_I\cap\Gamma)(Q_I\cap\Gamma)\tilde{F}_I^{-1}\subset(Q_I\cap\Gamma)(A_I\cap\Gamma)\tilde{F}_I^{-1}\subset(Q_I\cap\Gamma)F_I^{-1}(A_I\cap\Gamma)\end{equation}
Let
\begin{equation}\label{lambdai1}\Lambda(I)=(Q_I\cap\Gamma)F_I^{-1}\subset\bbq_I(K)\end{equation}
Note that $\bbp_\emptyset=\bbq_\emptyset=\bbg,\h \bba_\emptyset=\bba,\h \tilde{F}_\emptyset=F={F}_\emptyset$ and $\Lambda(\emptyset)=\Gamma F^{-1}.$
\begin{lem}\label{7.8go}
For $j\in\{1,\ldots,r\}$ and $I\subset\{1,\ldots,r\}\setminus\{j\}$ there exists a finite subset $E\subset\bbp(K)$ such that $\Lambda(I)\Lambda(I\cup\{j\})\subset\Lambda(I)E.$
\end{lem}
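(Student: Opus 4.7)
The plan is to combine the reduction theory for $\bbq_I$ (encoded in \eqref{qi-cusps}-\eqref{lambdai1}) with commensurability of $\Gamma$ and its $\bbp(K)$-conjugates.

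\textbf{Global factorization.} First I would establish the inclusion $\bbq_I(K)\subset\Lambda(I)\bbp(K)$. Inverting \eqref{qi-cusps} gives $\bbq_I(K)=(Q_I\cap\Gamma)\tilde{F}_I^{-1}(\bbp\cap\bbq_I)(K)$; the second inclusion in \eqref{lambdai1} upgrades $(Q_I\cap\Gamma)\tilde{F}_I^{-1}$ to a subset of $\Lambda(I)(A_I\cap\Gamma)$. Since $\bba_I\subset\bbp$ and $(\bbp\cap\bbq_I)(K)\subset\bbp(K)$, the resulting factorization lands inside $\Lambda(I)\bbp(K)$.

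\textbf{Reduction of $\Lambda(I\cup\{j\})$.} Applying this factorization to each of the finitely many elements of $F_{I\cup\{j\}}^{-1}\subset\bbq_{I\cup\{j\}}(K)\subset\bbq_I(K)$, I write $f'^{-1}=\delta_{f'}g_{f'}^{-1}p_{f'}$ with $\delta_{f'}\in Q_I\cap\Gamma$, $g_{f'}\in F_I$, and $p_{f'}$ in a finite subset $P_0\subset\bbp(K)$. For any $\gamma_2\in Q_{I\cup\{j\}}\cap\Gamma\subset Q_I\cap\Gamma$ the prefactor $\gamma_2\delta_{f'}$ remains in $Q_I\cap\Gamma$, hence $\gamma_2f'^{-1}\in\Lambda(I)P_0$. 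This gives $\Lambda(I\cup\{j\})\subset\Lambda(I)P_0$, whence $\Lambda(I)\Lambda(I\cup\{j\})\subset\Lambda(I)^2P_0$. It then suffices to find a finite $E_1\subset\bbp(K)$ with $\Lambda(I)^2\subset\Lambda(I)E_1$, after which $E:=E_1P_0$ will complete the proof.

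\textbf{Auxiliary inclusion and main obstacle.} For $\gamma_1f^{-1}\gamma_2f'^{-1}\in\Lambda(I)^2$ (now $f,f'\in F_I$), I split $\gamma_2=\tau\sigma$ using finite coset representatives for $(Q_I\cap\Gamma)\cap f\Gamma f^{-1}$ in $Q_I\cap\Gamma$, so that $\sigma^*:=f^{-1}\sigma f\in Q_I\cap\Gamma$ while the residue $\nu_\tau:=f^{-1}\tau f$ ranges over a finite subset of $Q_I$. This rearranges the element into $\gamma_1\nu_\tau\sigma^*(f^{-1}f'^{-1})$, where $\{f^{-1}f'^{-1}\}$ and $\{\nu_\tau\}$ are finite subsets of $\bbq_I(K)$. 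Applying the global factorization to the products that arise, and using commensurability of $\Gamma$ with $p^{-1}\Gamma p$ for each $p\in\bbp(K)$ that appears, one absorbs the remaining $Q_I\cap\Gamma$-factor back into $\Lambda(I)$ and collects the surviving $\bbp$-parts into a finite set $E_1$. The main obstacle is the bookkeeping: each commensurability push past a $\bbp(K)$-element generates a fresh residue in $\bbq_I(K)$ together with a new $Q_I\cap\Gamma$-remainder, and one must verify that iterating these operations only produces elements from a bounded finite collection of sets (determined by $F_I$, $\tilde{F}_I$, and finite coset representatives for the finite-index subgroups of $Q_I\cap\Gamma$ coming from conjugation by the finitely many $\bbp(K)$-elements arising so far), so that the accumulated $\bbp$-parts remain finite. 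The combinatorics parallels Lemma~7.8 of \cite{GO} and carries over to the positive-characteristic setting without essential change once reduction theory in the form \eqref{qi-cusps}-\eqref{lambdai1} is available.
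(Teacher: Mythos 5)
Your first two steps are correct: the factorization $\bbq_I(K)\subset\Lambda(I)\bbp(K)$ obtained by inverting \eqref{qi-cusps} and applying \eqref{lambdai1}, and the consequence $\Lambda(I\cup\{j\})\subset\Lambda(I)P_0$ with $P_0\subset\bbp(K)$ finite, are both fine (here one uses $\bbq_{I\cup\{j\}}\subset\bbq_I$ and $\bba_I\subset\bbp$). But the proof is not complete, and the gap sits exactly at the heart of the lemma. The easy part is the containment $\Lambda(I)\Lambda(I\cup\{j\})\subset\Lambda(I)\bbp(K)$; the whole content of the statement is that a \emph{finite} subset of $\bbp(K)$ suffices. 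Your third step, which is supposed to deliver $\Lambda(I)^2\subset\Lambda(I)E_1$ with $E_1$ finite, only describes an iterative scheme of commensurability pushes and then explicitly defers the decisive point ("one must verify that iterating these operations only produces elements from a bounded finite collection"). Nothing in the sketch bounds the number of passes or shows that the residues produced at later stages stay inside sets fixed in advance, so the finiteness of the accumulated $\bbp$-parts -- which is the lemma -- is asserted rather than proved. Note also that after your rearrangement $\gamma_1\nu_\tau\sigma^*(f^{-1}f'^{-1})$ you are facing an expression of exactly the same shape you started with (integral element, fixed rational element, integral element, fixed rational element), so the reduction has not actually gained anything.

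The missing ingredient is already stated in the paper, in the sentence preceding \eqref{lambdai1}: elements of $\bbq_I(K)$ with bounded denominators lie in finitely many cosets of $Q_I\cap\Gamma$. Since $F_I$ and $F_{I\cup\{j\}}$ are fixed finite subsets of $\bbq_I(K)$ and $Q_{I\cup\{j\}}\cap\Gamma$ consists of $S$-integral points, the set $F_I^{-1}(Q_{I\cup\{j\}}\cap\Gamma)F_{I\cup\{j\}}^{-1}\subset\bbq_I(K)$ has uniformly bounded denominators, hence is contained in $(Q_I\cap\Gamma)\{d_1,\ldots,d_k\}$ for finitely many $d_i\in\bbq_I(K)$. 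Now apply your own global factorization to each representative: $d_i\in\Lambda(I)e_i$ with $e_i\in\bbp(K)$, so that
$\Lambda(I)\Lambda(I\cup\{j\})\subset(Q_I\cap\Gamma)\{d_1,\ldots,d_k\}\subset\Lambda(I)\{e_1,\ldots,e_k\}$,
and one may take $E=\{e_1,\ldots,e_k\}$. This single application of the bounded-denominator (commensurability) fact replaces the open-ended iteration, and it also makes the detour through $\Lambda(I\cup\{j\})\subset\Lambda(I)P_0$ and $\Lambda(I)^2$ unnecessary. This is the argument hiding behind the paper's one-line proof ("a consequence of the definition together with \eqref{qi-cusps}"), which follows the corresponding lemma in~\cite{GO}.
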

\begin{proof}
This is a consequence of the definition together with~\eqref{qi-cusps}.
\end{proof}

We now construct certain compact subsets of $G/\Gamma$ using the above. These will eventually give us the set $L$ which we need in the proof of Theorem~\ref{non-div}.

An $l$-tuple $((i_1,\lambda_1),\ldots,(i_l,\lambda_l))$ where $l\geq1,$ $i_1\ldots,i_l\in\{1,\ldots,r\}$ and $\lambda_1,\ldots\lambda_l\in\bbg(K)$ is called an called an {\it admissible} sequence of length $l$ if $i_1\ldots,i_l$ are distinct and $\lambda_{j-1}^{-1}\lambda_j\in\Lambda(\{i_1,\ldots,i_{j-1}\})$ for all $j=1,\ldots,l$ and we let $\lambda_0$ be the identity element. The empty sequence is called an admissible sequence of length $0.$ If $\xi$ and $\eta$ are two admissible sequences of length $l$ and $l'$ with $l\leq l'$ we say $\eta$ extends $\xi$ if the first $l$ terms of $\eta$ coincide with $\xi.$ For an admissible sequence $\xi$ of length $l\geq0$ we let $\ccal(\xi)$ be the set of all pairs $(i,\lambda)$ with $1\leq i\leq r$ and $\lambda\in\bbg(K)$ for which there exists an admissible sequence $\eta$ of length $l+1$ extending $\xi,$ for such $\eta$ the pair $(i,\lambda)$ is necessarily the last term. Note that if $l=0$ then $\ccal(\xi)$ consists of all pairs $(i,\lambda)$ with $1\leq i\leq r$ and $\lambda\in\Lambda(\emptyset).$

For any admissible sequence $\xi$ of length $l$ we define the support of $\xi,$ which we will denote by $\supp(\xi),$ to be the empty set of $l=0$ and the set $\{(i_1,\lambda_1),\ldots,(i_l,\lambda_l)\}$ if $\xi=((i_1,\lambda_1),\ldots,(i_l,\lambda_l)).$

Let $\xi$ be an admissible sequence of length $l\geq0.$ Let $\alpha$ and $a<b$ be positive real numbers define
\begin{align*}\label{e:w-compact}W_{\alpha,a,b}(\xi)=\{g\in G:\Phi_j(g\lambda)&\geq\alpha,\forall(j,\lambda)\in\ccal(\xi)\\&\mbox{and}\h a\leq\Phi_i(g\lambda)\leq b,\forall(i,\lambda)\in\supp(\xi)\}\end{align*}  

We have the following

\begin{prop}\label{w-compact}
Let $\xi$ be an admissible sequence of length $l\geq0.$ Let $\alpha$ and $a<b$ be positive real numbers. Then $W_{\alpha,a,b}(\xi)\Gamma/\Gamma$ is a relatively compact subset of $G/\Gamma.$ 
\end{prop}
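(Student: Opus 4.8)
The plan is to...**Proof proposal for Proposition~\ref{w-compact}.**

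The plan is to argue by induction on the length $l$ of the admissible sequence $\xi$, following the scheme of~\cite[Section 7]{GO} and~\cite{DM1}. For the base case $l=0$, the set $\ccal(\xi)$ consists of all pairs $(j,\lambda)$ with $1\le j\le r$ and $\lambda\in\Lambda(\emptyset)=\Gamma F^{-1}$, while $\supp(\xi)=\emptyset$; so $W_{\alpha,a,b}(\emptyset)\Gamma/\Gamma=\{g\Gamma:\Phi_j(g\gamma f^{-1})\ge\alpha\ \forall j,\ \forall\gamma\in\Gamma,\ f\in F\}$. Because $F$ is finite, this reduces to showing that for each fixed $f$ the set of $g\Gamma$ with $\Phi_j(g\gamma f^{-1})\ge\alpha$ for all $j$ and all $\gamma$ is relatively compact, which is exactly the Mahler/Godement-type compactness criterion supplied by reduction theory: by~\eqref{e;cusps} we may reduce mod $\mathbb{P}(K)F$, and the functions $\Phi_i$ are precisely the "standard representations" attached to the maximal parabolics $\bbp_i$, so a lower bound on all of them along a $\mathbb{P}(K)$-orbit forces the $\mathbb{A}$-part to lie in a bounded (Siegel) region; combined with $A=\kcal A^0$ and cocompactness of $\kcal$ this gives relative compactness.

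For the inductive step, suppose the statement holds for all admissible sequences of length $\le l$ and let $\xi=((i_1,\lambda_1),\dots,(i_{l+1},\lambda_{l+1}))$ have length $l+1$; write $\xi'$ for its truncation of length $l$, so that $(i_{l+1},\lambda_{l+1})\in\ccal(\xi')$ and $I=\{i_1,\dots,i_l\}$, $j=i_{l+1}$. The idea is that on $W_{\alpha,a,b}(\xi)$ we now have the two-sided bound $a\le\Phi_j(g\lambda_{l+1})\le b$ in addition to the bounds inherited from $\xi'$. The extra two-sided bound, after conjugating by $\lambda_l$ (using $\lambda_{l}^{-1}\lambda_{l+1}\in\Lambda(I)=(Q_I\cap\Gamma)F_I^{-1}$ and the relations~\eqref{lambdai1}), should be transported into the group $Q_I$, where Lemma~\ref{go-7.6} applies: it writes the relevant elements of $Q_I$ satisfying $\Phi_j\in[a,b]$ inside $M_0\,Q_{I\cup\{j\}}(A_j\cap\Gamma)$ with $M_0$ compact. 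Pushing the compact factor $M_0$ and the bounded $A_j\cap\Gamma$-translation around, and using Lemma~\ref{7.8go} to control how $\Lambda(I)\Lambda(I\cup\{j\})$ sits inside $\Lambda(I)E$ for a finite $E\subset\bbp(K)$, one should be able to realize $W_{\alpha,a,b}(\xi)\Gamma/\Gamma$ as contained in a finite union of translates (by the compact set $M_0$ and by elements of $E$) of sets of the form $W_{\alpha',a',b'}(\xi'')\Gamma/\Gamma$ for admissible sequences $\xi''$ of length $l$ associated with $I\cup\{j\}$, together with the already-established length-$\le l$ cases; here $\alpha',a',b'$ are adjusted by the operator norms on $\mathbb{V}_i$ of the finitely many group elements involved. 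Relative compactness then follows from the inductive hypothesis together with properness of left translation on $G/\Gamma$.

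The main obstacle I expect is the bookkeeping in the inductive step: correctly tracking, under the various right translations by elements of $\Lambda(I)$, $F_I$, $M_0$ and $E$, how the index set grows from $I$ to $I\cup\{j\}$ and how each $\Phi_i$ transforms, so that one genuinely lands inside $W$-sets of strictly shorter admissible sequences rather than going in circles. In particular one must check that the finitely many new "denominators" produced by Lemma~\ref{go-7.6} and~\eqref{qi-cusps} really do lie in $\bbp(K)$ (so that conjugating by them changes $\Phi_i$ only by a bounded multiplicative factor that can be absorbed into $\alpha,a,b$), which is where the normality of $\bbq_I$ in $\bbp_I$, the decomposition $\bbp_I=\bba_I\bbq_I$, and the compactness statement~\eqref{e;azn-compact} all get used. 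A secondary point requiring care is that, in positive characteristic, the reduction-theoretic input~\eqref{e;cusps} and the Siegel-domain description must be invoked in the form valid for function fields (via~\cite{Be},~\cite{Sp},~\cite{Ha}) rather than quoting the number-field statements verbatim, but once~\eqref{e;cusps} is in hand the compactness argument is formally the same.
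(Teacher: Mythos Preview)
Your proposal is correct and follows the same approach the paper intends: the paper's entire proof is the one-line citation ``The same proof as in~\cite[Proposition~7.14]{GO} goes through,'' and your sketch is precisely an outline of that argument, correctly identifying the reduction-theoretic base case and the inductive use of Lemma~\ref{go-7.6} and Lemma~\ref{7.8go}. One small point of bookkeeping to watch: in the scheme of~\cite{GO} the induction is most naturally organized on $r-l$ (base case $l=r$, where $\ccal(\xi)=\emptyset$ and all $\Phi_i$ are two-sidedly bounded), rather than on $l$ with base case $l=0$; your phrase ``admissible sequences $\xi''$ of length $l$ associated with $I\cup\{j\}$'' hints at this reversal, since $|I\cup\{j\}|=l+1$, so be sure the direction of the reduction matches the way Lemma~\ref{go-7.6} enlarges $I$.
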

\begin{proof}
The same proof as in~\cite[Proposition, 7.14]{GO} goes through.
\end{proof} 

We need a few standard facts about polynomial maps. 
For $N, M, d$ positive integers let us denote by $\pnmd$ the space of polynomials from $K_\nu^N\rightarrow K_\nu^M$ with degree bounded by $d.$ As before we let $\varpi_\nu$ denote the uniformizer of $K_\nu$ and we assume $|\varpi_\nu|_\nu=1/q_\nu.$ 
In non-arthimedean metrics any point of a ball can be considered as the center of the ball thus if $B\subset K_\nu^N$ is a ball and $t\in B$ we have $B=B(t,r)$ for some $r>0$ depending on $B,$ we let $\frac{1}{q_\nu}B(t)$ denote the ball centered at $t$ with radius $r/q_\nu.$ We have

\begin{lem}\label{polynomial1}
Let $B$ be a ball in $K_\nu^N.$ Then for any $F\in\pnmd$ there exists $t_0\in B$ such that $|F(t)|_\nu=\sup_B|F|_\nu$ for all $t\in\frac{1}{q_\nu} B(t_0).$
\end{lem}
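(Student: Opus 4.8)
The plan is to prove Lemma~\ref{polynomial1} by a scaling/compactness argument reducing to the case where $B$ is the unit ball $\ocal_\nu^N$, followed by an explicit analysis of where a polynomial in $\pnmd$ attains its supremum of the $\nu$-norm. First I would note that, after composing with an affine change of coordinates $t\mapsto \varpi_\nu^{-k}(t-c)$ sending $B=B(c,q_\nu^{-k})$ to $\ocal_\nu^N$, the space $\pnmd$ is preserved (degrees do not increase under affine substitution), and the claim about $\frac{1}{q_\nu}B(t_0)$ transforms into the claim about $\frac{1}{q_\nu}\ocal_\nu^N(t_0')=t_0'+\varpi_\nu\ocal_\nu^N$. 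So it suffices to treat $B=\ocal_\nu^N$.

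Next, for $F=(F_1,\dots,F_M)\in\pnmd$ on $\ocal_\nu^N$, I would reduce to a scalar statement: $\sup_{\ocal_\nu^N}|F|_\nu=\max_m \sup_{\ocal_\nu^N}|F_m|_\nu$, and if $t_0$ works simultaneously for the component $F_m$ that realizes the outer max—i.e. $|F_m(t)|_\nu=\sup_{\ocal_\nu^N}|F_m|_\nu$ for all $t$ in some $t_0+\varpi_\nu\ocal_\nu^N$—then since the $\nu$-norm of each $F_{m'}$ can only decrease or stay equal (it is bounded by $\sup_{\ocal_\nu^N}|F_{m'}|_\nu\le\sup_{\ocal_\nu^N}|F|_\nu$) we are done provided that choice of $t_0$ does not make other components exceed this; but other components are already globally bounded by $\sup_{\ocal_\nu^N}|F|_\nu$, so no conflict arises and $|F(t)|_\nu=\sup_{\ocal_\nu^N}|F|_\nu$ throughout $t_0+\varpi_\nu\ocal_\nu^N$. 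Hence it remains to prove: for a single polynomial $f\in K_\nu[x_1,\dots,x_N]$ of degree at most $d$, there is $t_0\in\ocal_\nu^N$ with $|f(t)|_\nu=\sup_{\ocal_\nu^N}|f|_\nu$ for all $t\in t_0+\varpi_\nu\ocal_\nu^N$.

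For this scalar claim I would argue as follows. Scaling $f$ by a power of $\varpi_\nu$, assume $\sup_{\ocal_\nu^N}|f|_\nu=1$, i.e. $f$ has coefficients in $\ocal_\nu$ but not all in $\varpi_\nu\ocal_\nu$; equivalently the reduction $\bar f\in \bbf_\nu[x_1,\dots,x_N]$ modulo $\varpi_\nu$ is a nonzero polynomial of degree at most $d$, where $\bbf_\nu=\ocal_\nu/\varpi_\nu\ocal_\nu$ is the (finite) residue field. A point $t\in\ocal_\nu^N$ satisfies $|f(t)|_\nu=1$ exactly when $\bar f(\bar t)\ne 0$ in $\bbf_\nu$, and this condition depends only on $\bar t\in\bbf_\nu^N$, hence is constant on each coset $t+\varpi_\nu\ocal_\nu^N$. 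So it suffices to find $\bar t_0\in\bbf_\nu^N$ with $\bar f(\bar t_0)\ne 0$—but such a point exists precisely when $\bar f$ does not vanish identically on $\bbf_\nu^N$. The main obstacle is exactly this last point: over a small finite field a nonzero polynomial can vanish at every point (e.g. $x^q-x$). I would handle it by using the structure of $\pnmd$: the natural model is that $\pnmd$ consists of polynomials whose degree in each variable separately is bounded (this is the convention in~\cite{Gh},~\cite{GO} for these quantitative non-divergence statements), so for $q_\nu>d$ a nonzero such $\bar f$ cannot vanish on all of $\bbf_\nu^N$ by the Schwartz–Zippel/combinatorial Nullstellensatz bound; and the finitely many residue fields with $q_\nu\le d$ among $\nu\in S$ are dealt with by enlarging the ``denominator'' slightly—replace the reduction mod $\varpi_\nu$ by reduction mod $\varpi_\nu^e$ for $e$ large enough that $q_\nu^{e}>d$ (equivalently shrink the target ball to radius $q_\nu^{-e}$), which only changes the statement by the harmless replacement of $\frac{1}{q_\nu}B$ by a comparably smaller sub-ball and is absorbed into the constants in the applications. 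Assembling these reductions gives the lemma.
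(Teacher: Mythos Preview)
Your approach differs from the paper's, which is considerably more direct: the paper simply picks any $t_0\in B$ at which $|F|_\nu$ attains its supremum, translates so that $t_0=0$, passes to a single coordinate $f=F_1$ with $|f(0)|_\nu=\sup_B|f|_\nu$, writes $f=a_0+\sum_{\beta\neq0}a_\beta t^\beta$, and asserts that the maximality of $|f(0)|_\nu$ forces $|a_\beta t^\beta|_\nu\le|a_0|_\nu$ for all $t\in B$; shrinking $B$ by the factor $1/q_\nu$ makes these strict, and the ultrametric inequality gives $|f(t)|_\nu=|a_0|_\nu$ on $\frac{1}{q_\nu}B(t_0)$. No reduction modulo $\varpi_\nu$ is used.

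There is a genuine gap in your argument at the scalar step: the assertion ``$\sup_{\ocal_\nu^N}|f|_\nu=1$, i.e.\ $f$ has coefficients in $\ocal_\nu$ but not all in $\varpi_\nu\ocal_\nu$'' is false. For $N=1$ and $f(x)=\varpi_\nu^{-1}(x^{q_\nu}-x)$ one has $|f(t)|_\nu\le1$ for every $t\in\ocal_\nu$ (with equality at $t=\varpi_\nu$), yet the coefficients are not in $\ocal_\nu$, so there is nothing to reduce mod $\varpi_\nu$. Your instinct that $x^{q_\nu}-x$ causes trouble is right, but you have mislocated the obstruction, and your proposed fixes (reinterpreting $\pnmd$, or reducing mod $\varpi_\nu^e$) prove a weaker statement than the one claimed. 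In fact this example shows the lemma itself fails once $d\ge q_\nu$: writing $t^{q_\nu}-t=\prod_{c\in\bbf_\nu}(t-c)$ one sees $|f(t)|_\nu=1$ exactly when $|t-c|_\nu=1/q_\nu$ for the unique $c\in\bbf_\nu$ with $t\equiv c\pmod{\varpi_\nu}$, and every ball $t_0+\varpi_\nu\ocal_\nu=c+\varpi_\nu\ocal_\nu$ contains the point $c$ where $|f|_\nu=0$. So the paper's unproved implication ``$|a_\beta t^\beta|_\nu\le|a_0|_\nu$ on $B$'' is equally invalid in this regime, and your suggestion that one needs either $q_\nu>d$ or a smaller sub-ball is in fact what is required for the applications.
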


\begin{proof}
Let $f$ and $B$ be given and let $t_0\in B$ such that $|F(t_0)|_\nu=\sup_B|F|_\nu.$  Expanding $F$ about $t_0$ we may and will assume $t_0=0.$ Now let $F=(F_1,\ldots,F_M)$ we may and will assume $\sup_B |F|_\nu=|F_1(0)|_\nu.$ We write $f=F_1$ and let 
$$f=a_0+\sum a_{\beta}t_1^{i_1}\cdots t_{N}^{i_N}\h\h\mbox{where}\h\h a_0=f(0)$$ 
The assumption that $|f(0)|_\nu=\sup_B|f|_\nu$ implies that $|a_\beta t_1^{i_1}\cdots t_{N}^{i_N}|_\nu\leq |a_0|_\nu$ for all $t=(t_1,\ldots,t_N)\in B.$ Thus $|a_\beta t_1^{i_1}\cdots t_{N}^{i_N}|_\nu<|a_0|_\nu$ for all $t\in\frac{1}{q_\nu}B.$ Using the ultra metric property, we get $|f(t)|_\nu=|a_0|_\nu$ for all $t\in\frac{1}{q_\nu}B$ as we wanted to show.
\end{proof}

Recall from~\cite[Lemma, 2.4]{KT} that for any $F\in\pnmd,$ any ball $B\subset K_\nu^N$ and any $\vare>0$ we have
\begin{equation}\label{good-func}\theta(\{t\in B:\h |F(t)|_\nu<\vare\sup_B|F|_\nu\})\leq C\vare^{1/Nd}\theta(B)\end{equation}
where $C>0$ is a constant depending only on $N$ and $d.$ As a corollary we get
\begin{lem}\label{polynomial2}
Given $\eta\in(0,1),$ there exists $R>1$ such that if $B_0\subset B$ are two balls with radius $r_0$ and $r$ respectively such that $r_0\geq\eta r,$ then $\sup_B|F|_\nu\leq R\cdot\sup_{B_0}|F|_\nu$ for any $F\in\pnmd.$ 
\end{lem}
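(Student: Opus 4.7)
The plan is to derive this directly from the $(C,\alpha)$-good type inequality \eqref{good-func}, which is already cited from~\cite{KT}. The idea is that if $\sup_{B_0}|F|_\nu$ were too small compared to $\sup_B|F|_\nu$, then the sublevel set of $F$ inside $B$ would have to contain the entire ball $B_0$, contradicting the measure estimate \eqref{good-func} as soon as $\theta(B_0)$ exceeds the estimated measure of this sublevel set.

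First, I would record the elementary fact that in $K_\nu^N$ (with the sup norm coming from the valuation) a ball of radius $r$ has $\theta$-measure proportional to $r^N$; hence under the hypothesis $r_0\geq\eta r$ we have
$$\theta(B_0)\geq\eta^N\theta(B).$$
Next, given $\eta\in(0,1)$, I would choose $\varepsilon>0$ small enough that
$$C\varepsilon^{1/Nd}<\eta^N,$$
where $C=C(N,d)$ is the constant from \eqref{good-func}. Applying \eqref{good-func} to the ball $B$ yields
$$\theta\bigl(\{t\in B:\h|F(t)|_\nu<\varepsilon\sup_B|F|_\nu\}\bigr)\leq C\varepsilon^{1/Nd}\theta(B)<\eta^N\theta(B)\leq\theta(B_0).$$
Since $B_0\subset B$ and the sublevel set above has measure strictly smaller than $\theta(B_0)$, it cannot contain $B_0$; therefore there exists some $t\in B_0$ with $|F(t)|_\nu\geq\varepsilon\sup_B|F|_\nu$. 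This immediately gives $\sup_{B_0}|F|_\nu\geq\varepsilon\sup_B|F|_\nu$, so the lemma holds with $R=1/\varepsilon$, a constant depending only on $\eta$, $N$ and $d$.

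There is no genuine obstacle here: the only thing one has to verify carefully is that the normalization of $\theta$ on $K_\nu^N$ does indeed scale like the $N$-th power of the radius for ultrametric balls, which is immediate from the product structure of the Haar measure and the fact that a one-dimensional ball of radius $q_\nu^{-k}$ has measure $q_\nu^{-k}$ (up to a fixed normalization). The bound $R$ depends on $\eta$ through $\varepsilon=(\eta^N/C)^{Nd}$, hence ultimately only on $\eta$, $N$ and $d$, as required.
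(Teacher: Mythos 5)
Your derivation is correct and matches the paper, which presents the lemma precisely as an immediate corollary of \eqref{good-func}: choosing $\varepsilon$ so small that the sublevel set $\{t\in B:|F(t)|_\nu<\varepsilon\sup_B|F|_\nu\}$ has measure strictly less than $\theta(B_0)$ forces a point of $B_0$ where $|F|_\nu\geq\varepsilon\sup_B|F|_\nu$, giving $R=1/\varepsilon$. The only minor adjustment is that in the ultrametric setting the measure of a ball of radius $r_0$ is comparable to $r_0^N$ only up to a factor depending on $q_\nu$ (effective radii are powers of $q_\nu$), so the bound $\theta(B_0)\geq\eta^N\theta(B)$ should read, say, $\theta(B_0)\geq(\eta/q_\nu)^N\theta(B)$; since $\nu$ is fixed this merely changes the constant $R$ and leaves the argument intact.
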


Using Lemma~\ref{polynomial1} and Lemma~\ref{polynomial2} the same argument as in~\cite[Proposition 7.12]{GO} gives the following 

\begin{prop}\label{polynomial3}
There exists $R>1$ such that for any $\alpha>0,$ any ball $B$ and any subfamily $\fcal\subset\pnmd$ satisfying
\begin{itemize}
\item[(i)] for any $t_0\in B,$ $\#\{F\in\fcal: |F(t_0)|_\nu<\alpha\}<\infty$\vspace{.5mm}
\item[(ii)] for any $f\in\fcal,$ $\sup_B|F|_\nu\geq\alpha$
\end{itemize}
one of the following holds
\begin{itemize}
\item[(a)] there exists $t_0\in B$ such that $|F(t_0)|_\nu\geq\alpha$ for all $F\in\fcal,$\vspace{.75mm}
\item[(b)] there exists a ball $B_0\subset B$ and $F_0\in\fcal$ such that
$$|F_0(B_0)|_\nu\subset[\alpha/R,R\alpha]\h\h\mbox{and}\h\h \sup_{B_0}|F|_\nu\geq\alpha/R\h\h\mbox{for all}\h\h F\in\fcal$$
\end{itemize}
\end{prop}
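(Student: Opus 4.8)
The plan is to follow the strategy of~\cite[Proposition 7.12]{GO}, reducing the statement to a covering/combinatorial argument that feeds off the two analytic inputs, Lemma~\ref{polynomial1} and Lemma~\ref{polynomial2}. First I would dispose of the easy case: if alternative (a) fails, then for every $t_0\in B$ there is some $F\in\fcal$ with $|F(t_0)|_\nu<\alpha$. I then want to produce, for each such $t_0$, a ball around it on which some fixed polynomial is comparable to $\alpha$ while no polynomial in the family drops much below $\alpha$, and then extract a single ball $B_0$ and a single $F_0$ doing this globally.

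The key steps, in order, are as follows. Fix $R$ to be the constant produced by Lemma~\ref{polynomial2} for a suitable $\eta$ (to be pinned down by the ultrametric bookkeeping; taking $\eta=1/q_\nu$ is the natural choice since that is the ratio appearing in Lemma~\ref{polynomial1}). Given $t_0\in B$ with $|F(t_0)|_\nu<\alpha$ for some $F\in\fcal$, assumption (i) guarantees the set $\fcal_{t_0}=\{F\in\fcal:|F(t_0)|_\nu<\alpha\}$ is finite and nonempty; by assumption (ii) each such $F$ has $\sup_B|F|_\nu\ge\alpha$, so by continuity (local constancy) of $|F|_\nu$ there is a largest ball $B(t_0)\subset B$ centered at $t_0$ on which $|F|_\nu$ stays $<\alpha$ for at least one $F$; shrinking slightly and using Lemma~\ref{polynomial1} applied to that $F$ on a ball one step larger, I get a ball $B_0\ni t_0$ with $|F_0(B_0)|_\nu\subset[\alpha/R,R\alpha]$ for the relevant $F_0=F$. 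It remains to check the second clause of (b), namely $\sup_{B_0}|F|_\nu\ge\alpha/R$ for \emph{all} $F\in\fcal$: for $F\notin\fcal_{t_0}$ we have $|F(t_0)|_\nu\ge\alpha$ so trivially $\sup_{B_0}|F|_\nu\ge\alpha\ge\alpha/R$; for $F\in\fcal_{t_0}$, maximality of the ball $B(t_0)$ forces $|F|_\nu$ to reach $\alpha$ on a ball only slightly larger than $B_0$, and Lemma~\ref{polynomial2} then transfers this to the lower bound $\alpha/R$ on $B_0$ itself.

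The subtle point, which I expect to be the main obstacle, is the bookkeeping that makes the \emph{same} ball $B_0$ work simultaneously for the upper bound on $F_0$ and the lower bound on every $F$ in the family. One has to choose the radii so that: (1) $B_0$ is small enough that $F_0$ is pinned in $[\alpha/R,R\alpha]$ (Lemma~\ref{polynomial1} gives the factor $1/q_\nu$ shrinkage needed here); (2) $B_0$ is large enough, or rather the next ball up $B_1\supset B_0$ with $r(B_1)\le R$-controlled ratio $r(B_0)$ is such that each $F\in\fcal_{t_0}$ already attains $|F|_\nu\ge\alpha$ somewhere on $B_1$, which is exactly the maximality of $B(t_0)$; and (3) apply Lemma~\ref{polynomial2} with $B_0\subset B_1$ to get $\sup_{B_0}|F|_\nu\ge\alpha/R$. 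Because the value group is discrete, all the relevant balls differ by integer powers of $q_\nu$, so these constraints are consistent after possibly enlarging $R$ by an absolute factor depending only on $N,M,d$; the ultrametric inequality does most of the work that in the archimedean case requires genuine estimates. Once the radii are chosen consistently, the proposition follows verbatim from the argument in~\cite[Proposition 7.12]{GO}, and I would simply cite that proof for the combinatorial skeleton while spelling out the ultrametric modifications above.
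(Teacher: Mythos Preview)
Your approach is the same as the paper's: both defer to the argument of \cite[Proposition~7.12]{GO}, with Lemmas~\ref{polynomial1} and~\ref{polynomial2} supplying the non-archimedean replacements for the analytic inputs there, so the strategy is correct. One bookkeeping claim in your sketch needs adjustment, however. The sub-ball $B_0$ produced by Lemma~\ref{polynomial1} (on which $|F_0|_\nu$ is identically equal to its supremum over the larger ball) need not contain the original point $t_0$; in fact it cannot, since $|F_0(t_0)|_\nu<\alpha$ while $|F_0|_\nu\equiv\sup|F_0|_\nu\ge\alpha$ on $B_0$. This breaks your ``trivial'' verification of the lower bound for $F\notin\fcal_{t_0}$. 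The repair is immediate and uses the same tool you already invoke for $F\in\fcal_{t_0}$: since $t_0$ lies in the enlarged ball $B_1\supset B_0$ and the radius ratio is bounded below by a fixed power of $1/q_\nu$, Lemma~\ref{polynomial2} gives $\sup_{B_0}|F|_\nu\ge R^{-1}\sup_{B_1}|F|_\nu\ge R^{-1}|F(t_0)|_\nu\ge\alpha/R$ uniformly for every $F\in\fcal$, so no case distinction is needed.
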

The proof of Theorem~\ref{non-div} now goes through the same lines as the proof of Theorem 7.3 in~\cite{GO}, see page 55 in~\cite{GO}.

\section{Proof of Theorem~\ref{horospherical} and equidistribution}\label{sec;horo}
In this section we will prove Theorem~\ref{horospherical}. Here we give the proof assuming Theorem~\ref{thm;exp-mixing}. The proofs indeed go through with non-quantitaive version of decay of matrix coefficients as we remarked above. Before starting the proof wee need the following fact which is essentially a restatement of the fact that $L^2_{00}(X)$ is the orthogonal complement of the space of $G^+$-invariant vectors.  

\begin{lem}\label{G+inv}
Let $\mu$ be a probability measure on $X.$ Suppose that for any $\phi\in L^2_{00}(X)$ we have
$$\int_X\phi(x)d\mu(x)=0$$
Then $\mu$ is $G^+$-invariant
\end{lem}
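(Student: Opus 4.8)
The plan is to unwind the characterization of $L^2_{00}(X)$ as the orthogonal complement of the $G^+$-invariant vectors in $L^2(X)$. Recall that by definition $L^2(X) = L^2_{00}(X) \oplus (L^2_{00}(X))^\perp$, and that $(L^2_{00}(X))^\perp$ is precisely the closed subspace of $L^2(X)$ consisting of $G^+$-fixed vectors (this is the characterization the paper says it will use). The hypothesis on $\mu$ says exactly that $\mu$, viewed as a linear functional $\phi \mapsto \int_X \phi \, d\mu$, annihilates $L^2_{00}(X)$. So the only way $\mu$ can ``see'' a function is through its component in $(L^2_{00}(X))^\perp$, i.e. through its $G^+$-invariant part.

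First I would fix $\phi \in C^\infty(X)$, or more generally $\phi \in L^2(X) \cap L^\infty(X)$, and decompose $\phi = \phi_{00} + \phi'$ with $\phi_{00} \in L^2_{00}(X)$ and $\phi' \in (L^2_{00}(X))^\perp$. By hypothesis $\int_X \phi_{00} \, d\mu = 0$, so $\int_X \phi \, d\mu = \int_X \phi' \, d\mu$. Next, for $g \in G^+$, I would note that $g$ acts unitarily on $L^2(X)$ preserving the decomposition (since $G^+$ is normal in $G$, hence $L^2_{00}$ is $G$-invariant, and in particular $G^+$-invariant), and that $\phi'$ is $G^+$-fixed. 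Therefore $(g\phi)' = g(\phi') = \phi'$, while $(g\phi)_{00} = g(\phi_{00}) \in L^2_{00}(X)$. Applying the hypothesis again to $g\phi$ gives $\int_X (g\phi)\, d\mu = \int_X (g\phi)'\, d\mu = \int_X \phi' \, d\mu = \int_X \phi\, d\mu$. Since $\int_X (g\phi)\, d\mu = \int_X \phi(g^{-1}x)\,d\mu(x)$, this says $g_*\mu = \mu$ tested against $\phi$, for every such $\phi$.

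To conclude that $\mu$ is genuinely $G^+$-invariant I would pass from the dense-ish class of test functions to all of them: the equality $\int \phi\, d(g_*\mu) = \int\phi\,d\mu$ holds for all $\phi$ in a class that is dense in $C_c(X)$ in the appropriate topology (e.g. $C^\infty(X) \cap C_c(X)$, which separates points and is dense in $C_0(X)$ by Stone--Weierstrass on the totally disconnected space $X$, or simply the locally constant compactly supported functions), hence two finite Borel measures agreeing on it agree, so $g_*\mu = \mu$ for all $g \in G^+$.

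The only genuine subtlety — the step I expect to need the most care — is the measurability/integrability bookkeeping when applying the hypothesis to $\phi'$: a priori $\phi' \in L^2(X)$ is only an $L^2$ function, and the statement ``$\int_X \phi'\, d\mu = \int_X \phi\, d\mu$'' must be interpreted correctly since $\mu$ need not be absolutely continuous with respect to the Haar measure $m_X$. This is handled by keeping $\phi$ itself in $C^\infty(X)$ (so $\int_X\phi\,d\mu$ is literally an integral of a continuous function) and interpreting the hypothesis $\int_X \psi\, d\mu = 0$ as applying to $\psi = \phi_{00}$ when the latter happens to have a continuous representative, or — more robustly — by first proving the statement for $\phi$ locally constant of compact support, for which $\phi$, $\phi_{00}$, $\phi'$, and all their $G^+$-translates lie in $C^\infty(X) \cap L^2(X)$ and everything is unambiguous, and then extending by density. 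Granting that the hypothesis is to be read in this way (which is how it is used in the sequel), the argument above is complete.
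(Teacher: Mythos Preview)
Your argument is correct and is essentially identical to the paper's own proof: decompose a test function as $\phi=\phi_{00}+\phi'$ along $L^2(X)=L^2_{00}(X)\oplus(L^2_{00}(X))^\perp$, use that this splitting is $G$-equivariant and that $\phi'$ is $G^+$-fixed, and apply the hypothesis to both $\phi_{00}$ and $(g\phi)_{00}$. The only difference is that you are more careful than the paper about the integrability bookkeeping (the paper simply writes the chain of equalities for an arbitrary $\psi\in L^2(X)$ without comment), and you add the density step to pass to all of $C_c(X)$; these are refinements rather than a different route.
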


\begin{proof}
Recall that $L^2_{00}(X)$ is the orthogonal complement of the space of all $G^+$-fixed vectors in $X$ and we have $L^2(X)=L^2_{00}(X)\oplus(L^2_{00}(X))^\perp$ as $G$-representation. Let $g\in G^+$ be arbitrary. For any $\psi\in L^2(X)$ we let $\psi^g(x)=\psi(gx).$ We need to show $\int_X\psi^g(x)d\mu(x)=\int_X\psi(x)d\mu.$ Let $\psi=\psi_0+\psi_1$ where $\psi_0\in L^2_{00}(X)$ and $\psi_1\in(L^2_{00}(X))^\perp.$ Since the decomposition is $G$-equivariant we have $\psi^g=\psi_0^g+\psi_1^g$ is corresponding decomposition of $\psi^g.$ Since $g\in G^+$ we have $\psi_1^g=\psi_1.$ Our hypothesis applied to $\psi_0$ and $\psi_0^g$ implies that
$$\int_X\psi d\mu=\int_X\psi_1 d\mu=\int_X\psi_1^gd\mu=\int_X\psi^gd\mu$$
as we wanted to show. 
\end{proof}

Let us remark that the converse of the above lemma also holds but we do not need the converse here. We now start the

{\bf Proof of Theorem~\ref{horospherical}.} In the proof we will denote  $du=d\theta(u)$ for simplicity.

\textit{Proof of (i).} By our assumption $G/\Gamma$ is compact hence we may take $L=X$ in the statement of Proposition~\ref{m;equidistribution}. Let $\phi\in \cc(X).$ Recall that loc. cit. asserts that 
$$\left|\int_{U_0}\phi(s^nuz)\h du\right|\leq E'e^{-n\kappa'}$$ 
where $E'$ is a constant depending on $\phi$ and $z$ is any point in $X.$ If one takes $z=s^{-n}x$ one then gets
\begin{equation}\label{horo;equi}\left|\int_{U_0}\phi(s^nus^{-n}x)du\right|=\left|\frac{1}{\theta(U_n)}\int_{U_n}\phi(ux)du\right|\leq E'e^{-n\kappa'}\end{equation} 
As we mentioned in section~\ref{sec;notation} the filtration $\{U_m=s^mU_0s^{-m}:\h m\geq0\}$ is an averaging sequence for $U$. Since $\mu$ is $U$-ergodic~\eqref{horo;equi} implies 
\begin{equation}\label{horo;equi2}\int_X\phi d\mu=0\end{equation}

This and Lemma~\ref{G+inv} imply that $\mu$ is $G^+$-invariant. Now since $G^+$ is a normal, unimodular subgroup of $G$ Lemma~\ref{h;normal-unimodular} implies that $\mu$ is the $\overline{G^+\Gamma}$-invariant invariant probability Haar measure on the closed orbit $\overline{G^+\Gamma}/\Gamma$ as we wanted to show.   

\textit{Proof of (ii).} We first show that the assertion holds for $U=U_{\nu},$ a maximal horospherical subgroup of $G_{\nu}.$ 

Let $x\in X$ be $\mu$-generic for the action of $U$ i.e. for any bounded function $\phi\in C^\infty(X)$ and any $\vare>0$ there exists some $n_0$ such that if $n\geq n_0$ then we have 
\begin{equation}\label{birkhoff}\left|\int_{U_0}\phi(s^nus^{-n}x)\h du-\int_X\phi\h d\mu\right|<\vare\end{equation} 

Let $\vare>0$ be given fix some $n_0$ as above and let $L$ be the compact set obtained in Theorem~\ref{non-divergence} for this choice of $\vare$. 
Assume now that $x$ is such that (1) in the Theorem~\ref{non-divergence} holds. Since $s$ normalizes $U$ this implies that (1) in the Theorem~\ref{non-divergence} also holds for $s^{-n}x$ for any $n\in\bbz$. Hence there exists a positive integer $m_0=m_0(n),$ depending on $s^{-n}x,$ such that if $m>m_0,$ then 
\begin{equation}\label{e;nondivergence2}\theta(\{u\in U_m\h:\h us^{-n}x\in L\})\geq (1-\epsilon)\theta(U_m)\end{equation}

For the rest of the argument we may and will assume that $\phi\in\cc(X).$ Let $\mathcal{K}^\ell$ be a deep congruence subgroup such that (i)~\eqref{e;iwasawa-comp} holds for $\kcal^\ell$ (ii) $\phi$ is fixed by $\kcal^\ell$ (iii) for any $x\in L$ the map from $k\mapsto kx$ is injective on $\kcal^{\ell}.$ Fix a large enough $n>n_0$ such that ~(\ref{e;equi'}) holds for $\psi=\frac{1}{\theta(\mathcal{K}^{\ell}\cap U)}\chi_{\mathcal{K}^{\ell}\cap U}$ in the formulation of Proposition~\ref{m;equidistribution} this is to say $E[\mathcal{K}:\mathcal{K}^\ell]s^{-n\kappa'}<\vare.$ Now let $m_0=m_0(n)$ be such that~\eqref{e;nondivergence2} holds for all $m>m_0.$ We have
$$\int_{U_0}\phi(s^ns^mus^{-m}s^{-n}x)du=\int_{U_0}\int_{U_0}\phi(s^ns^mus^{-m}s^{-n}x)du\psi(v)dv=$$
$$\iint_{U_0}\phi(s^ns^m(s^{-m}vs^m)us^{-m}s^{-n}x)\psi(v)dudv=\iint_{U_0}\phi(s^nvs^mus^{-m}s^{-n}x)\psi(v)dudv$$
Note that in the above we used the fact that $s^{-m}vs^m\in U_0$ and that $U_0$ is a group to get the second identity. Let $B\subset U_0$ be such that $u\in B$ implies that $s^mus^{-m}s^{-n}x\in L$ and let  $A=U_0\setminus B.$ By~\eqref{e;nondivergence2} we have $|A|<\vare.$ We have 
\begin{align*}\int_{U_0}\int_{U_0}\phi(s^nvs^mus^{-m}s^{-n}x)\psi(v)dvdu&=\int_{A}\int_{U_0}\phi(s^nvs^mus^{-m}s^{-n}x)\psi(v)dvdu\\ &+\int_{B}\int_{U_0}\phi(s^nvs^mus^{-m}s^{-n}x)\psi(v)dvdu\end{align*} 
The first term is indeed bounded by $\vare\cdot\sup(|\phi|).$ As for the second term recall that since $u\in B$ we may utilize Proposition~\ref{m;equidistribution} and get 
$$\left|\int_{U_0}\phi(s^nvs^mus^{-m}s^{-n}x)\psi(v)dv\right|\leq E[\mathcal{K}:\mathcal{K}^\ell]s^{-n\kappa'}<\vare$$
So there is constant $C'$ depending on $\phi$ such that
$$\left|\int_{U_0}\phi(s^{n+m}us^{-n-m}x)du\right|< C\vare$$
We now use~(\ref{birkhoff}) and get
$$\left|\int_X\phi d\mu\right|< C\vare$$
which thanks to Lemma~\ref{G+inv} says $\mu$ invariant under $G^+$-invariant. Hence again by the Lemma~\ref{h;normal-unimodular} $\mu$ is the $\overline{G^+\Gamma}$-invariant invariant probability Haar measure on the closed orbit $\overline{G^+\Gamma}/\Gamma$ which finishes the proof if $x$ satisfies (1) in Theorem~\ref{non-divergence}.

We will now proceed by induction on the dimension of $\bbg.$ Thanks to the previous paragraph we may and will assume that (2) in Theorem~\ref{non-divergence} holds i.e. if we let $x=g\Gamma,$ then there exists $\mathbb{P}_i$  and $\lambda\in F\Gamma$ such that $g^{-1}Ug\subset \lambda P_i\lambda^{-1}$ where $P_i=\bbp_i(K_S),$ with the notation as in the Theorem~\ref{non-divergence}. Let $\mathbb{P}_i=\mathbb{L}\cdot\mathbb{W}$ be the Levi decomposition of $\bbp_i$ and let $W=\bbw(K_S).$ Let $\mathbb{M}=[\mathbb{L},\mathbb{L}]$ be the derived group of $\mathbb{L}.$ The group $\bbm$ is semisimple and is defined over $K.$ We let  $M=\bbm(K_S).$ Define ${}^{\circ}P_i=MW.$

Note also that $g^{-1}Ug\subset \lambda {}^{\circ}P_i\lambda^{-1}.$ So replacing $U$ by $g^{-1}Ug$ and ${}^\circ P_i$ by $\lambda {}^{\circ}P_i\lambda^{-1}$ we may and will assume that $U\subset {}^{\circ}P_i.$ Since $\lambda\in\bbg(K)$ we have $\Delta=\Gamma\cap {}^{\circ}P_i$ is a lattice in ${}^{\circ}P_i$ and also $W\cap\Gamma$ is a (uniform) lattice in $W$ hence ${}^{\circ}P_i/\Delta$ is a closed subset of $X.$ These reductions also imply that $e\Gamma$ is generic for $\mu$ with respect to the action of $U.$ As a consequence we may and will consider $\mu$ as a measure on ${}^{\circ}P_i/\Delta.$ We write $U=VW$ where $V$ is a maximal unipotent subgroup of $M.$ Using the previous notation we let $W_\nu$, $V_\nu$ and $M_\nu$ be the corresponding $\nu$-components.

Let $\mu=\int_Y\mu_yd\sigma$ be the ergodic decomposition of $\mu$ with respect to $W_\nu.$ Recall that $W_\nu$ is a normal and unimodular subgroup of ${}^{\circ}P_i.$ Hence by Lemma~\ref{h;normal-unimodular} we have that for $\sigma$-a.e. $y\in Y$ the measure $\mu_y$ is the $F=\overline{W_\nu\Delta}$-invariant measure on a closed $F$-orbit where the closure is taken with respect to the Hausdorff topology. Since $\Gamma$ is a congruence lattice lattice and $\bbw$ is a $K$-split unipotent group we deduce that $F=W\Delta.$ Hence for $\sigma$-a.e. $y\in Y$ the measure $\mu_y$ is invariant under $W$. Since $\mu$ is $U_\nu$-invariant ergodic measure we have  $\sigma$ is $V_\nu$-invariant and ergodic measure on $Y.$ Recall that $\mu$ is a measure on ${}^{\circ}P_i/\Delta.$ Now since $\sigma$-a.e. $\mu_y$ is $W$-invariant and $W$ is the unipotent radical of ${}^{\circ}P_i=MW$ we may and will identify $Y$ with $M/\Delta\cap M$ and hence $\sigma$ is a measure on $M/\Delta\cap M.$

Let us recall that $\bbm$ is a semisimple group defined over $K$ and $\Delta\cap M$ is a congruence lattice in $\bbm(\o_S),$ furthermore $\sigma$ is $V_\nu$-invariant ergodic measure on $M/\Delta\cap M.$ Hence we may apply the induction hypothesis and get: There exist some $g\in M$ and a connected $K$-parabolic group $\bbq$ of $\bbm$ such that if we let ${}^\circ Q$ and $Q^+$ be defined as in the introduction then $\sigma$ is the Haar measure on the closed orbit of $g\Sigma/{}^\circ Q\cap\Delta$ where $\Sigma=\overline{Q^+({}^\circ Q\cap\Delta)}$ and the closure is taken with respect to the Hausdorff topology. Hence the measure $\mu$ is the Haar measure on the closed orbit $g\Sigma W/(Q\cap\Delta)(W\cap\Delta).$ Now let $\bbh=\bbq\bbw,$ this is a $K$-parabolic subgroup of $\bbm\bbw.$ We let $H=\bbh(K_S).$ Since $W$ is a subgroup of the split unipotent radical of all minimal $K_S$-pseudo parabolic subgroups of $\bbh$ we get $H^+=Q^+W$ and that $H^+=P^+$ for a $K$-parabolic subgroup $\bbp$ of $\bbg.$ Note also that since $W$ is the unipotent radical and in particular a normal subgroup we have $\Sigma W=\overline{Q^+W({}^\circ Q\cap\Delta)}=\overline{H^+({}^\circ P\cap\Delta)}.$ This finishes the proof in the case $U=U_\nu.$  

We now turn to the general case. Hence $U_\nu\subset U$ and $\mu$ is $U$-invariant ergodic measure on $X.$ Let $\mu=\int_Y\mu_yd\sigma$ be the ergodic decomposition of $\mu$ with respect to $U_{\nu}.$ For $x=g\Gamma\in X$ we let $y(x)$ denote the corresponding point from $(Y,\sigma).$ Since $\mu_{y(x)}$'s are $U_{\nu}$-ergodic the above argument says almost all $\mu_{y(x)}$'s are the $\Sigma(x)$-invariant measure on a closed orbit of $\Sigma(x)\cdot x$ where $\Sigma(x)=g\overline{P^+({}^\circ P\cap\Gamma)}g^{-1},$ and $\bbp$ is a $K$-subgroup. We will say $P$ is associated to $x$. For any $K$-subgroup, $\bbp,$ of $\bbg$ we let $P=\bbp(K_S)$ and let
\begin{equation}\label{e;singular}\scal(P)= \{x\in Y: P\h\mbox{is associated to}\h x\}\end{equation} 
Since there are only countably many such subgroups we have; there exists some $P$ such that $\sigma(\scal(P))>0.$ Note also that $U$ normalizes $U_{\nu},$ so for every $u\in U$ the equality $u\mu_y(x)=\mu_{y(ux)}$ is true for $\mu$-almost all $x\in X.$ Furthermore we have $\Sigma(ux)=u\Sigma(x)u^{-1}.$ Thus $\scal(P)$ is $U$-invariant. This and the fact that $\mu$ is $U$-ergodic imply that $\sigma(\scal(P))=1.$ Observe that $P^+({}^\circ P\cap\Delta)$ is Zariski dense in ${}^\circ P$ hence the map $x\mapsto {\overline{\Sigma(x)}}^z,$ the Zariski closure of $\Sigma(x),$ is a $U$-equivariant Borel map from $\scal(P)$ to $G/N_G({}^\circ P).$ Lemma~\ref{h;bz-measure} now implies that this map is constant almost everywhere. Hence $\mu$ is the $g\overline{P^+({}^\circ P\cap\Gamma)}g^{-1}$-invariant measure on a closed orbit $g\overline{P^+({}^\circ P\cap\Gamma)}\Gamma/\Gamma$ for some $g\in G$. This finishes the proof of Theorem~\ref{horospherical}. 

\textbf{Equidistribution of orbits of horospherical subgroups.} 
As we mentioned in the introduction one application of Theorem~\ref{horospherical} is to prove equidistribution of orbits of $U$ which satisfy the conditions of Theorem~\ref{horospherical}. It is possible to refine the above proof and get equidistribution statement from the proof. In here however we use the linearization technique in order to get equidistribution result. This seems to be shorter and it is a ``standard" method by now. The linearization technique was developed by Dani and Margulis~\cite{DM2} in the Lie group case. Similar linearization statements in the $S$-arithmetic setting in characteristic zero were proved by Tomanov~\cite{T}. In the function field setting such results were proved in~\cite[section 5]{EM}. The linearization technique is an avoidance principle. Roughly speaking it states that unipotent orbits of algebraically generic points do not spend a ``long" time ``too close" to proper algebraic varieties. We have the following

\begin{cor}\label{equi-horo}
Let the notation be as before and assume that either (i) or (ii) of Theorem~\ref{horospherical} is satisfied. Then for any $x=g\Gamma\in X$ and $f\in C_c(X)$ we have
$$\lim_{m\rightarrow\infty}\frac{1}{U_m}\int_{U_m}f(ux)d\theta(u)=\int_Xfd\mu$$
where $\mu$ is the $g\overline{P^+({}^\circ P\cap\Gamma)}g^{-1}$-invariant probability measure on the closed orbit $g\overline{P^+({}^\circ P\cap\Gamma)}g^{-1}\cdot x,$ and $\bbp(K_S)$ is a $K$-parabolic subgroup $\bbp.$   
\end{cor}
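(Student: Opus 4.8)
It suffices to establish the convergence for $f$ in the dense subspace of compactly supported locally constant functions on $X.$ For $m\ge 0$ let $\mu_m$ be the probability measure on $X$ with
$$\int_X\phi\,d\mu_m=\frac{1}{\theta(U_m)}\int_{U_m}\phi(ux)\,d\theta(u),$$
the normalised push--forward of Haar measure on $U_m$ under $u\mapsto ux.$ The plan is to show $\mu_m\to\mu$ weak-$*$; granting the tightness established below, this reduces to showing that every weak-$*$ limit of a subsequence of $(\mu_m)$ equals $\mu.$ A first observation: since $U_0\subset U_1\subset\cdots$ is an increasing exhaustion of $U$ by compact open \emph{subgroups} $U_m=s^mU_0s^{-m},$ every fixed $u_0\in U$ satisfies $u_0U_m=U_m$ for all large $m,$ so $(u_0)_*\mu_m=\mu_m$ for such $m$; hence any subsequential weak-$*$ limit $\nu$ of $(\mu_m)$ is $U$-invariant.

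I next rule out escape of mass. In case (i) of Theorem~\ref{horospherical} the space $X$ is compact, so $(\mu_m)$ is tight; and since Theorem~\ref{horospherical}(i) asserts that the $U$-action is uniquely ergodic, every subsequential limit --- being a $U$-invariant probability measure --- is the asserted $\mu,$ and we are done. In case (ii) we apply Theorem~\ref{non-divergence} to $x$ and $U.$ If its alternative (1) holds, then for every $\vare>0$ there is a compact $L\subset X$ with $\mu_m(L)\ge 1-\vare$ for all large $m,$ so $(\mu_m)$ is tight and every subsequential limit $\nu$ is a $U$-invariant probability measure. If alternative (2) holds, then writing $x=g\Gamma$ there are a maximal $K$-parabolic $\bbp_i$ and $\lambda\in F\Gamma$ with $g^{-1}Ug\subset\lambda\,{}^\circ P_i\lambda^{-1}$; conjugating exactly as in the proof of Theorem~\ref{horospherical}(ii), the orbit $Ux$ lies on a closed orbit of ${}^\circ P_i$ (closed because $W\cap\Gamma$ is cocompact in the unipotent radical $W$), Theorem~\ref{non-divergence} applied along $s^{-n}x$ confines the whole family $\{U_mx\}$ to one compact subset of that closed orbit, and pushing forward to the semisimple quotient $M=\bbm(K_S)$ (with $\bbm=[\bbl,\bbl]$) reduces the statement to the same equidistribution assertion on $M/(\Delta\cap M)$ for a semisimple group of strictly smaller dimension; one then concludes by induction on $\dim\bbg.$ In all remaining cases $\nu$ is a $U$-invariant probability measure obtained as a subsequential limit, and it remains to prove $\nu=\mu.$

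Take the ergodic decomposition $\nu=\int\nu_y\,d\sigma(y)$ for the action of $U$ (or of a suitable $U_\nu\subset U,$ as in \S\ref{sec;horo}). By Theorem~\ref{horospherical}, $\sigma$-almost every $\nu_y$ is the homogeneous probability measure on a closed orbit of a group $h\overline{(P')^+({}^\circ P'\cap\Gamma)}h^{-1}$ for some $K$-parabolic $\bbp'$ and $h\in G.$ To see that $\nu_y$ is in fact $\sigma$-a.e.\ the homogeneous measure $\mu$ attached to $x$ itself, I invoke the linearization technique of Dani--Margulis~\cite{DM2}, in the positive-characteristic $S$-arithmetic form of~\cite[\S5]{EM} (cf.~\cite{T}). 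For each $K$-parabolic $\bbp'$ that can occur, fix as around~\eqref{e;chevalley} a $K$-representation $(\rho',V')$ and $v'\in V'(K)$ whose stabiliser determines ${}^\circ P',$ so that the ``singular set'' $\scal(\bbp')$ of points whose $U$-orbit closure is a \emph{proper} translate of $\overline{(P')^+({}^\circ P'\cap\Gamma)}$ is cut out, via the orbit map $hg\Gamma\mapsto\rho'(hg)v',$ by meeting a fixed discrete $\Gamma$-orbit in $V'(K_S)$; here $u\mapsto\rho'(ug)v'$ is a polynomial map of bounded degree, so~\eqref{good-func} and the reduction theory of the previous section apply. The avoidance principle then yields the dichotomy: either $Ux$ is itself contained in some $\scal(\bbp'),$ which returns us to the inductive situation of the previous paragraph, or, for every $\vare>0,$ there is a compact $\mathcal Q\subset X$ avoiding a fixed neighbourhood of $\bigcup_{\bbp'}\scal(\bbp')$ with $\mu_m(\mathcal Q)>1-\vare$ for all large $m.$ In the latter case $\nu$ gives zero mass to $\bigcup_{\bbp'}\scal(\bbp'),$ so --- invoking Lemma~\ref{h;bz-measure} exactly as at the end of \S\ref{sec;horo}, to exclude a positive-measure mixture of singular orbits with varying associated parabolic --- $\nu$-a.e.\ ergodic component, hence $\nu$ itself, is the homogeneous probability measure on the full closed orbit $g\overline{P^+({}^\circ P\cap\Gamma)}g^{-1}\cdot x$; that is, $\nu=\mu.$ Since every subsequential limit equals $\mu,$ we conclude $\mu_m\to\mu.$

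The main obstacle is the linearization step: one must know that the Dani--Margulis avoidance estimates hold in positive characteristic with constants uniform along the family $\{U_m\},$ which rests on function-field reduction theory, on the boundedness of the degrees of the polynomial maps $u\mapsto\rho'(ug)v',$ and on the $(C,\alpha)$-good inequality~\eqref{good-func} --- all of which are available by the preceding sections and by~\cite{EM}. The remaining work is organisational: to handle the countable collection of parabolics $\bbp'$ to be avoided, and to check that the ``generic'' parabolic $\bbp$ attached to $x$ is the unique maximal one with $g^{-1}Ug\subset{}^\circ P$ whose orbit through $x$ is closed. Granting these, the corollary follows from Theorem~\ref{horospherical}, Theorem~\ref{non-divergence}, and Proposition~\ref{m;equidistribution}.
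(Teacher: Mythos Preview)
Your strategy is essentially the paper's---weak-$*$ limits, nondivergence, Theorem~\ref{horospherical} on ergodic components, then linearization from~\cite[\S5]{EM}---but you have made it considerably more involved than necessary, and one step is not right as written.

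The paper avoids your dichotomy/induction entirely by a single organisational move: it fixes at the outset a $K$-parabolic $\bbp$ \emph{minimal} with $g^{-1}Ug\subset{}^\circ P$, and then works on the closed set $Y={}^\circ P/({}^\circ P\cap\Gamma)$. On $Y$ one appeals to Theorem~\ref{kleinbock-margulis} (not Theorem~\ref{non-divergence}) to get that any weak-$*$ limit $\mu$ is a probability measure on $Y$; there is no alternative (2) to chase. One then defines, for each proper $K$-parabolic $F\subsetneq P$, the set $\scal(F)=\{p\in{}^\circ P:\ p^{-1}Up\subset F\}$. By Theorem~\ref{horospherical} every ergodic component of $\mu$ is supported on some $\scal(F)$ or is the desired $P^+$-homogeneous measure; minimality of $P$ means $x\notin\scal(F)$, and then~\cite[Theorem~5.5]{EM} gives $\mu(\scal(F))=0$ for each of the countably many $F$. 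That is the whole argument---no induction on $\dim\bbg$, and no need for Lemma~\ref{h;bz-measure} or Proposition~\ref{m;equidistribution} here.

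Two specific issues in your write-up. First, when alternative~(2) of Theorem~\ref{non-divergence} holds, your claim that ``Theorem~\ref{non-divergence} applied along $s^{-n}x$ confines the whole family $\{U_mx\}$ to one compact subset of that closed orbit'' is unjustified: alternative~(2) only says the orbit lies on the closed ${}^\circ P_i$-orbit, not that it is bounded there. Tightness within the parabolic orbit has to come from the $W$-direction being cocompact together with nondivergence on the Levi quotient, i.e.\ from the inductive hypothesis itself; you should say this, or (better) bypass it via the minimal-$P$ trick. Second, at the end you speak of the ``unique \emph{maximal}'' parabolic with $g^{-1}Ug\subset{}^\circ P$; it is the \emph{minimal} one that is relevant---maximality would give $P=G$ and lose the content of the linearization step.
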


\begin{proof}
The proof is standard. Let $x=g\Gamma$ assume $P$ is $K$-parabolic subgroup of $G$ minimal with the property that $g^{-1}Ug\subset {}^\circ P.$ We will show that the theorem holds with this ${}^\circ P.$ Let $\widetilde{X}$ be the one-point compactification of $X$ if $X$ is not compact and be $X$ if $X$ is compact. For any natural number $m$ define the probability measure $\mu_m$ on $X$ by 
$$\int_X f(y)d\mu_m(y)=\frac{1}{U_m}\int_{U_m} f(ux)d\theta(u)$$ 
where $f$ is a bounded continuous function on $X.$ As $\widetilde{X}$ is compact the space of probability measures on $\widetilde{X}$ is weak$^*$ compact. Let $\mu$ be a limit point of $\{\mu_n\}$. By identifying $\mu$ we show that there is only one limit points which in return gives convergence. Note that since $Y={}^\circ P/({}^\circ P\cap\Gamma)$ is closed we may and will assume that $\mu_m$'s and also $\mu$ are supported on $Y\cap\{\infty\}.$
It follows from nondivergence of unipotent trajectories, see Theorem~\ref{kleinbock-margulis}, that $\mu$ is concentrated on $Y$. Note that $\mu$ is $U$-invariant. We let $\mu=\int_Y\mu_yd\sigma(y)$ be a decomposition of $\mu$ into $U$-ergodic components. For any proper $K$-parabolic subgroup $F$ of $P$ let 
$$\scal(F)=\{p\in{}^\circ P:\h p^{-1}Up\subset F\}$$
By Theorem~\ref{horospherical} each ergodic component is supported on some $\scal(F).$ Minimality of $P$ together with theorem 5.5 in~\cite{EM} implies $\mu(\scal(F))=0$ for all proper subgroups $F.$ Since there are only countably many such $F$ we get that $\sigma$-almost all $\mu_{y(z)}$'s are the $P^+$-invariant measure on the closed orbit $g_z\overline{P^+({}^\circ P\cap\Gamma)}g_z^{-1}\cdot z.$ Now since $g^{-1}Ug\subset P^+$ we see that the support of $\mu$ is in $g\overline{P^+({}^\circ P\cap\Gamma)}g^{-1}\cdot x$ which then implies the theorem.
\end{proof}


\end{document}